\documentclass[12pt]{article}
\usepackage{amsmath,amsfonts,amssymb, amsthm,enumerate,graphicx}
\usepackage{tikz,authblk}
\usepackage{subfig}

\theoremstyle{plain}
\newtheorem{theorem}{{\bf Theorem}}[section]
\newtheorem{lemma}[theorem]{{\bf Lemma}}
\newtheorem{corollary}[theorem]{{\bf Corollary}}
\newtheorem{proposition}[theorem]{{\bf Proposition}}

\theoremstyle{definition}
\newtheorem{remark}[theorem]{{\bf Remark}}
\newtheorem{definition}[theorem]{{\bf Definition}}

\newcommand{\lk}[2]{{\rm lk}_{#1}(#2)}

\newcommand{\skel}[2]{{\rm skel}_{#1}(#2)}
\newcommand{\Kd}[1]{{\mathcal K}(#1)}
\newcommand{\bs}{\backslash}

\newcommand{\meets}{\leftrightarrow}
\newcommand{\nmeets}{\nleftrightarrow}

\textheight9in
\textwidth6in
\hoffset-0.6in
\voffset-0.6in

\author[1] {Benjamin A.~Burton}
\author[2] {Basudeb Datta}
\author[3] {Nitin Singh}
\author[4] {Jonathan Spreer}

\affil[1,4] {School of Mathematics and Physics, The University of Queensland, Brisbane, QLD 4072, Australia. bab@maths.uq.edu.au, j.spreer@uq.edu.au.}

\affil[2] {Department of Mathematics, Indian Institute of Science,
Bangalore 560\,012, India. dattab@math.iisc.ernet.in. }

\affil[3] {IBM Research India Lab, Manyata Embassy Business Park,  
8th Floor, G2, Bangalore  560\,045, India. nitsingk@in.ibm.com.}

\title{Separation index of graphs and stacked 2-spheres}


\date{To appear in {\bf Journal of Combinatorial Theory A}}

\begin{document}

\maketitle


\begin{abstract}

In 1987, Kalai proved that stacked spheres of dimension $d\geq 3$ are 
characterised by the fact that they attain equality in Barnette's celebrated Lower Bound Theorem. This result does not extend to dimension $d=2$. In this article, we give a characterisation 
of stacked $2$-spheres using what we call the {\em separation index}. Namely, we show that the separation index of a triangulated $2$-sphere is maximal if and only if it is stacked. In addition, we prove that, amongst all $n$-vertex triangulated $2$-spheres, the separation index is {\em minimised} by some $n$-vertex flag sphere for $n\geq 6$.

Furthermore, we apply this characterisation of stacked $2$-spheres to settle the outstanding $3$-dimensional case of the Lutz-Sulanke-Swartz conjecture that ``tight-neighbourly triangulated manifolds are tight''. 
For dimension $d\geq 4$, the conjecture has already been proved by Effenberger following a result of Novik and Swartz.

\end{abstract}

\noindent {\small {\em MSC 2010\,:} 57Q15, 57M20, 05C40.

\noindent {\em Keywords:} Stacked 2-spheres; Triangulation of 3-manifolds; 
Tight triangulation; Tight-neighbourly triangulation.}

\section{Introduction and results}

For a graph $G$, we investigate its measure of ``average"
disconnectivity $s(G)$, which we call its {\em separation index}. 
Roughly, $s(G)$ is the weighted average of the number of connected components over all induced subgraphs of $G$. This measure 
already appears in Hochster's formula \cite{Hochster77}, which relates the reduced homology groups of a simplicial complex to the graded Betti numbers of its associated Stanley-Reisner ideal. 
It also features as part of the sigma- and mu-vectors for simplicial
complexes introduced by Bagchi and Datta \cite{BDEJC} 
for studying tight triangulations (see Definition
\ref{def:sigmamu} below). 

While one may study $s(G)$ for graphs in general, in this paper we consider the case when $G$ is the $1$-skeleton of a triangulated 
$2$-sphere. For a simplicial complex $X$, $s(X)$ will denote the
separation index of the $1$-skeleton of $X$. We show: 

\begin{theorem}\label{thm:result1}
Let $X$ be an $n$-vertex triangulated $2$-sphere. Then $$s(X)\leq (n 
-8)(n +1)/20,$$ where equality occurs if and only if $X$ is a stacked sphere.
\end{theorem}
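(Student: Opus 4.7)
The plan is to first rewrite the separation index $s(X)$ in a form that exposes its combinatorial structure. From its definition as a weighted average of $\tilde\beta_0$ over induced subgraphs, and in the spirit of Hochster's formula referenced in the introduction, $s(X)$ should decompose as a binomially weighted sum over subsets $W \subseteq V(X)$ of $\tilde\beta_0(X[W])$. Since $\tilde\beta_0(X[W])$ is non-zero precisely when the induced subgraph is disconnected, and disconnection of an induced subgraph of a triangulated $2$-sphere is detected by missing edges, I would reindex (or at least bound) the sum by \emph{non-edges} of $X$. Every $n$-vertex triangulated $2$-sphere has exactly $(n-3)(n-4)/2$ non-edges, and the target bound satisfies
\[
\frac{(n-8)(n+1)}{20} \;=\; \frac{(n-3)(n-4)}{2} \cdot \frac{1}{10} \;-\; 1,
\]
so the natural strategy is to prove $s(X) \leq (\text{number of non-edges})/10 - 1$, with each non-edge contributing at most $1/10$ and with all non-edges tight in the stacked case.

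For the equality direction, I would proceed by induction on $n$. The base case $n = 4$ is the boundary of the tetrahedron, where every non-empty induced subgraph is connected and one checks directly that $s(\partial\Delta^3) = -1 = (4-8)(4+1)/20$. For the inductive step, any stacked $2$-sphere on $n \geq 5$ vertices has a vertex $v$ of degree $3$; removing $v$ together with its star and filling in the triangular hole gives a stacked sphere $X'$ on $n - 1$ vertices. Splitting subsets $W \subseteq V(X)$ according to whether $v \in W$, and using that $\lk{X}{v}$ is a single triangle so that the only disconnections involving $v$ are those in which $v$ is isolated from its three neighbours, a binomial manipulation should yield
\[
s(X) - s(X') \;=\; \frac{n-4}{10} \;=\; \frac{(n-8)(n+1) - (n-9)n}{20},
\]
which is exactly the predicted increment, completing the induction.

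The remaining task is the strict inequality for non-stacked spheres, and this is where I expect the bulk of the difficulty. The natural reduction step (removing a degree-$3$ vertex) is not always available --- the octahedron, for instance, has all vertices of degree $4$ --- so one must either use edge contractions or bistellar $2$-$2$ moves to reduce $n$, or prove the bound term-by-term as suggested by the factorization in the first paragraph. In the term-by-term approach, for each non-edge $\{u,v\}$ one must count (with the appropriate binomial weights) the induced subgraphs $X[W]$ that are disconnected ``through'' $\{u,v\}$ and show that this local sum is at most $1/10$, with strict inequality for at least one non-edge whenever $X$ is not stacked. The main obstacle is controlling this per-non-edge contribution: the weights are nontrivial binomial expressions, and the local structure around each non-edge (which triangles it ``almost'' bounds, whether alternative short paths exist between $u$ and $v$) governs the count. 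Tightness should correspond precisely to a tree-like arrangement of non-edges that characterises stacked spheres, and verifying this cleanly --- and then lifting it to a strict defect in $s(X)$ whenever $X$ admits a non-trivial flip --- is the key technical hurdle.
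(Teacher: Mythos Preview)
Your treatment of the equality case is sound and matches the paper's: the base case $n=4$ gives $s=-1$, and the inductive step via a $0$-move is exactly Lemma~\ref{lem:zeromove} (though note that the general $0$-move formula is \emph{multiplicative}, $s(X)=\frac{n+1}{n}s(X')+\frac{n+1}{20}$; your additive increment $(n-4)/10$ is only valid once $s(X')$ already equals the stacked value, which is all you need here).

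The genuine gap is the strict inequality. Your factorisation $(n-8)(n+1)/20=\tfrac{1}{10}\binom{n-3}{2}-1$ is a pleasant numerical coincidence, but you have no mechanism for turning it into a proof: $\tilde\beta_0(X[W])$ does not decompose canonically as a sum over non-edges, and any edge-by-edge telescoping from $K_n$ makes the ``contribution of a non-edge'' depend on the order of deletion, so the slogan ``each non-edge contributes at most $1/10$'' has no well-defined meaning. You yourself flag this as ``the key technical hurdle'' and leave it open. The paper takes a completely different route for this half: it runs the induction through bistellar moves, using Lemma~\ref{lem:pachner} to guarantee a vertex of degree $\le 5$. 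When the minimum degree is $3$ the $0$-move formula alone suffices; when it is $4$ or $5$ one writes $X$ as a $0$-move on a smaller sphere $Y$ followed by one or two $1$-moves, and the heart of the argument is an explicit size-preserving injection from the family $\{S:q(X[S])>q(Z[S])\}$ into $\{S:q(X[S])<q(Z[S])\}$, yielding $s(X)\le s(Z)$, after which Lemma~\ref{lem:zeromove} and the inductive bound on $s(Y)$ finish the job. Nothing in your proposal supplies an analogue of these injections, and without them the non-stacked case is simply unproved.
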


Theorem~\ref{thm:result1} provides both an upper bound for the separation index and a characterisation of stacked $2$-spheres. This supplements the 
characterisation of stacked $d$-spheres, $d \geq 3$, as the  triangulations which attain equality in Barnette's Lower Bound Theorem \cite{Ba,KA}. In dimension two all triangulated $n$-vertex $2$-spheres have
equal numbers of faces of all dimensions, and hence the Lower Bound Theorem cannot be used to characterise stacked spheres. 

Intuitively, it seems plausible that stacked spheres maximise the number of connected components of induced subcomplexes (and thus the separation index): stacked spheres naturally contain a large number of separating cycles of minimal length (i.e., separating $3$-cycles).
 
Regarding small separation indices, we prove: 

\begin{theorem}\label{thm:two}
Amongst all $n$-vertex triangulated $2$-spheres, $n \neq 5$, the one with minimum separation index is a flag $2$-sphere.
\end{theorem}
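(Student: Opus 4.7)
The natural approach is a local-exchange argument. Let $X^{\ast}$ be any $n$-vertex triangulated $2$-sphere attaining the minimum value of $s$. If $X^{\ast}$ contains no empty triangle (a set of three pairwise adjacent vertices not bounding a $2$-face) then $X^{\ast}$ is flag and we are done. Otherwise, pick an empty triangle $C = \{a,b,c\}$ in $X^{\ast}$; since $X^{\ast}$ is a $2$-sphere, $C$ separates it into two triangulated disks $D_1, D_2$ with common boundary cycle $abc$. The plan is to modify $X^{\ast}$ in the neighbourhood of $C$ to produce another $n$-vertex triangulated $2$-sphere $X'$ with $s(X')\le s(X^{\ast})$ and strictly fewer empty triangles. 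Iteration then yields a flag $2$-sphere with at most the same separation index as $X^{\ast}$, proving the claim.

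The key modification is an edge flip on $\{a,b\}$: this edge is shared by triangles $\{a,b,x\}\subset D_1$ and $\{a,b,y\}\subset D_2$, and when $\{x,y\}$ is not yet an edge of $X^{\ast}$ we replace those triangles by $\{a,x,y\}$ and $\{b,x,y\}$. This destroys the empty triangle $C$ because $\{a,b\}$ is no longer an edge. To compare separation indices I would expand, in the spirit of Hochster's formula,
\[
s(X) \;=\; \sum_{W\subseteq V(X)} w_{|W|}\,\bigl(c(X[W]) - 1\bigr),
\]
where $c$ counts connected components and the $w_k$ are the weights prescribed by the definition. Only subsets $W$ meeting $\{a,b,x,y\}$ see any change between $X^{\ast}$ and $X'$; a case analysis on $W\cap\{a,b,x,y\}$, using that $C$ separates the sphere, should show that the induced subgraph of $X'$ has no more connected components than that of $X^{\ast}$ in every case. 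When $\{x,y\}$ is already an edge the flip is blocked, so I would instead cut along $C$ into two strictly smaller $2$-spheres by capping each $D_i$ with a fresh copy of $C$, apply induction on $n$ (with small base cases checked directly, which is where the exclusion at $n=5$ enters, as the bipyramid admits no flag replacement), and reglue in a flag-preserving manner.

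The main obstacle is global control: a single flip destroys $C$ but may create fresh empty triangles through the newly inserted edge $\{x,y\}$, so naively iterating need not terminate. The expected resolution is to pick $C$ to be an \emph{innermost} empty triangle, meaning one of the disks $D_i$ bounded by $C$ contains no further empty triangles of $X^{\ast}$, and to orient the flip so that the new edge lies inside that innermost disk. Verifying that this choice keeps the total number of empty triangles monotonically decreasing, while simultaneously keeping the weighted component count non-increasing, is the technical heart of the proof, and is also where the topological fact that every $3$-cycle in a $2$-sphere is separating plays its essential role.
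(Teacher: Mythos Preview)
Your overall strategy---flip an edge of an empty triangle and compare component counts over all induced subsets---is the paper's approach, but several of your steps are wrong as stated, and the obstacle you flag is not the real one.

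First, the ``blocked'' case never arises. Since $C=\{a,b,c\}$ separates the sphere and the triangles $\{a,b,x\}$, $\{a,b,y\}$ lie in different disks, $x$ and $y$ are interior vertices of $D_1$ and $D_2$ respectively (note $x\neq c$, else $abc$ would be a face). Hence $\{x,y\}$ is \emph{never} an edge of $X^{\ast}$, and the flip $ab\mapsto xy$ is always available. Your cut--cap--induct--reglue detour is unnecessary.

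Second, and more seriously, your claim that the induced subgraph of $X'$ has no more components than that of $X^{\ast}$ ``in every case'' is false: take $W=\{a,b\}$. In $X^{\ast}$ this is a single edge; in $X'$ it is two isolated vertices. What actually works is a \emph{pairing}: when $a,b\in W$ but $x,y,c\notin W$, match $W$ with $W'=(W\setminus\{a,b\})\cup\{x,y\}$. Because $x$ and $y$ are separated in $X^{\ast}\setminus\{a,b,c\}$ but joined by the new edge in $X'$, the possible gain at $W$ is cancelled by a guaranteed loss at $W'$. The remaining subsets (those containing $c$, or containing one of $x,y$, or missing one of $a,b$) all satisfy $q(X'[W])\le q(X^{\ast}[W])$ individually, with $c$ playing the key role of keeping $a$ and $b$ connected after the flip.

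Third, you do not need to count empty triangles or worry about termination. A careful choice of \emph{which} edge of $C$ to flip (the paper arranges that, say, $\{x,c\}$ is a non-edge; this uses $|V_1|>1$ and a short planarity argument inside $D_1$) makes the single subset $\{x,y,c\}$ witness a strict drop not absorbed by the pairing. Thus $s(X')<s(X^{\ast})$ outright, so no non-flag sphere can minimise $s$, and the ``innermost empty triangle'' machinery is never needed.
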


Note that Theorem~\ref{thm:two} is not true for $n = 5$ since there is exactly one $5$-vertex triangulated $2$-sphere which is stacked but not flag. 

Theorem~\ref{thm:two} seems plausible from an intuitive point of view:  flag 
$2$-spheres are opposite to stacked sphere in the sense that they do not contain separating $3$-cycles at all. Hence, fewer induced subcomplexes can be associated with a large number of connected components. Note, however, that in the case of flag $2$-spheres there are usually many distinct separation indices possible for a fixed number of vertices. For instance, there are $87$ flag $2$-spheres with $12$ vertices, and these have $60$ distinct separation indices. So, flag $2$-spheres appear to be more diverse than stacked spheres. 

A striking implication of Theorem \ref{thm:result1} is that all 
tight-neighbourly $3$-manifolds have stacked spheres as vertex-links, i.e., they belong to Walkup's class $\Kd{3}$ of triangulated 3-manifolds. Indeed, we prove:

\begin{theorem}\label{thm:result2}
Let $X$ be a triangulated $3$-manifold. If $X$ is tight-neighbourly then $X$ is a neighbourly member of $\Kd{3}$. 
\end{theorem}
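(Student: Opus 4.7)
The plan is to reduce Theorem~\ref{thm:result2} to a term-by-term application of Theorem~\ref{thm:result1} to each vertex link of $X$. Intuitively, tight-neighbourliness is a global extremality condition which, via a Hochster-type double count, distributes into an extremality condition on the separation index of every vertex link; Theorem~\ref{thm:result1} then identifies each such link as a stacked $2$-sphere, placing $X$ in $\Kd{3}$.

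First I would unwind the definition of ``tight-neighbourly'' in dimension $3$: a triangulated $3$-manifold $X$ with $n$ vertices and first $\mathbb{F}$-Betti number $\beta_1$ is tight-neighbourly if it is neighbourly (so $f_1(X) = \binom{n}{2}$) and its $f$-vector saturates the Novik--Swartz-type bound of the form $\binom{n-4}{2} = 10\binom{\beta_1+2}{2}$. The neighbourly half of the conclusion is then immediate from the hypothesis, and the task reduces to showing $X \in \Kd{3}$, i.e.\ that every vertex link $\lk{X}{v}$ is a stacked $2$-sphere.

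Next I would invoke the $\sigma$-/$\mu$-vector machinery of Bagchi--Datta referenced in Definition~\ref{def:sigmamu}. The key point is that $\mu_0(X)$ admits, via Hochster's formula, a clean expression as a weighted sum of separation indices of induced subcomplexes, and for a closed $3$-manifold this reorganises (by collecting contributions of induced subcomplexes of the star of each vertex) into an identity of the schematic form
\begin{equation*}
\mu_0(X) \;=\; \sum_{v \in V(X)} s\bigl(\lk{X}{v}\bigr) \;+\; \Phi\bigl(f(X)\bigr),
\end{equation*}
where $\Phi$ is an explicit $f$-vector expression. On the other hand, the Bagchi--Datta/Novik--Swartz inequality gives $\mu_0(X) \geq \binom{d+2}{2}\beta_1(X;\mathbb{F})$, and the tight-neighbourly equation is precisely the combinatorial condition saturating this inequality for a neighbourly $X$. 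Substituting into the identity above, tight-neighbourliness forces
\begin{equation*}
\sum_{v \in V(X)} s\bigl(\lk{X}{v}\bigr) \;=\; \sum_{v \in V(X)} \frac{(n_v-8)(n_v+1)}{20},
\end{equation*}
where $n_v$ is the degree of $v$ in $X$, i.e.\ the number of vertices of the $2$-sphere $\lk{X}{v}$.

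Finally, Theorem~\ref{thm:result1} applied link-by-link gives $s(\lk{X}{v}) \leq (n_v-8)(n_v+1)/20$ for every $v$, so equality of the sums forces equality in every summand. The characterisation half of Theorem~\ref{thm:result1} then identifies each $\lk{X}{v}$ as a stacked $2$-sphere, giving $X \in \Kd{3}$. The main obstacle is the middle step: pinning down the precise identity between $\mu_0(X)$ and $\sum_v s(\lk{X}{v})$ and matching it with the exact form of the tight-neighbourly equation. This involves careful bookkeeping of Hochster's formula, reduced versus unreduced homology, and the interplay between the $\sigma$-/$\mu$-vectors of $X$ and those of its vertex links in the closed $3$-manifold setting; once that identity is in place, the conclusion drops out of Theorem~\ref{thm:result1} essentially for free.
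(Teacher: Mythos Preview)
Your overall strategy---sandwich $\beta_1(X)$ between two expressions that coincide under tight-neighbourliness, then use Theorem~\ref{thm:result1} link-by-link to force each vertex link to be stacked---is exactly the paper's approach. However, several of the details you wrote are incorrect and would not go through as stated.

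First, the relevant quantity is $\mu_1(X)$, not $\mu_0(X)$. By Definition~\ref{def:sigmamu}, $\mu_0(X)=1$ always, while $\mu_1(X) = 1 + \frac{1}{n}\sum_{v\in V(X)} \sigma_0(\lk{X}{v}) = 1 + \frac{1}{n}\sum_{v} s(\lk{X}{v})$. So the identity you are after is simply the \emph{definition} of $\mu_1$; no Hochster-type reorganisation or ``collecting contributions of stars'' is required. The ``main obstacle'' you anticipate in the middle step is not an obstacle at all.

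Second, your tight-neighbourly equation is wrong: in dimension $3$ it reads $10\,\beta_1(X) = \binom{n-4}{2}$, i.e.\ $\beta_1(X) = (n-4)(n-5)/20$, not $\binom{n-4}{2} = 10\binom{\beta_1+2}{2}$. Also, neighbourliness is not part of the definition of tight-neighbourly; it is a consequence (via Proposition~\ref{prop:novikswartz}(a) and $f_1\leq\binom{f_0}{2}$), and the paper explicitly invokes this.

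Third, the inequality you need is $\beta_1(X)\leq\mu_1(X)$ (Proposition~\ref{prop:bdejc}), valid because $X$ is neighbourly; you misstated it as $\mu_0(X)\geq\binom{d+2}{2}\beta_1$. Since $X$ is neighbourly each $n_v$ equals $n-1$, so Theorem~\ref{thm:result1} gives $\mu_1(X)\leq 1 + \frac{(n-1-8)(n-1+1)}{20} = \frac{(n-4)(n-5)}{20}$. The sandwich $\frac{(n-4)(n-5)}{20} = \beta_1(X) \leq \mu_1(X) \leq \frac{(n-4)(n-5)}{20}$ then forces equality in every link, and Theorem~\ref{thm:result1} finishes. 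With these corrections your argument becomes the paper's proof verbatim.
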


Novik and Swartz \cite{NS} proved a similar result for dimension 
$d\geq 4$. More precisely, they proved that a tight-neighbourly triangulated $d$-manifold belongs to $\Kd{d}$ for $d\geq 4$. 
In \cite{LSS}, Lutz, Sulanke and Swartz conjectured that, for $d\geq 3$, all tight-neighbourly triangulated $d$-manifolds are tight.
Using Novik-Swartz's result, Effenberger \cite{EFF} proved this conjecture for $d\geq 4$. Here, as a consequence of Theorem \ref{thm:result2} and Proposition \ref{prop:bd} below, we prove the 
Lutz-Sulanke-Swartz conjecture in the remaining case $d=3$. That is:

\begin{corollary}\label{cor:tntight}
Let $X$ be a tight-neighbourly triangulated $3$-manifold. Then $X$ is tight.
\end{corollary}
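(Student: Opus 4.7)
The proof plan is essentially a one-step deduction now that the heavy lifting has been assembled. First I would apply Theorem~\ref{thm:result2} to the given tight-neighbourly $3$-manifold $X$; this immediately yields that $X$ is a neighbourly member of Walkup's class $\Kd{3}$. That is, every vertex-link of $X$ is a stacked $2$-sphere and every pair of vertices of $X$ spans an edge. All of the combinatorial/topological work in establishing this step is already contained in Theorem~\ref{thm:result2}, whose proof in turn rests on the separation-index characterisation of stacked $2$-spheres given by Theorem~\ref{thm:result1}.

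Second, I would invoke Proposition~\ref{prop:bd}, which (judging from its role in the introduction) provides the converse-type assertion that every neighbourly member of $\Kd{3}$ is tight. Feeding the conclusion of the first step into this proposition delivers tightness of $X$ directly.

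Thus the entire argument is \textbf{Theorem~\ref{thm:result2} + Proposition~\ref{prop:bd}}, and can be phrased in two or three lines. There is no substantive obstacle in the corollary itself; the real content lies in the two ingredients. If anything, the only thing worth being careful about is verifying that the notion of ``tight-neighbourly'' used as a hypothesis here matches exactly the hypothesis of Proposition~\ref{prop:bd} (which is stated in terms of neighbourly membership in $\Kd{3}$), and that the class $\Kd{3}$ used in the two statements is the same Walkup class. Once those definitional matches are noted, the corollary follows at once.
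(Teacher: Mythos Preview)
Your approach is exactly the paper's: the proof there reads in full ``Follows from Theorem~\ref{thm:result2} and Proposition~\ref{prop:bd}.'' One small correction: Proposition~\ref{prop:bd} does \emph{not} say that every neighbourly member of $\Kd{3}$ is tight (indeed Remark~\ref{remark:converse} gives counterexamples); it says that a neighbourly member of $\Kd{3}$ is tight \emph{if and only if} it is tight-neighbourly, so you must feed the tight-neighbourly hypothesis in a second time at that step.
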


Although the converse of Theorem  \ref{thm:result2} is true in dimensions 
$d\geq 4$ (see Corollary \ref{cor:lss}), it is not true in dimension three 
(see Remark \ref{remark:converse}).

\begin{remark} \label{remark:character}
Tight-neighbourly $3$-manifolds are more common than one might think.
In addition to the $5$-vertex standard $3$-sphere and the $9$-vertex non-sphere 
triangulated $3$-manifold found by Walkup, the authors recently discovered 
$75$ tight-neighbourly $3$-manifolds of five additional topological types 
\cite{BDSS2}. These include the two $29$-vertex examples in \cite{DS}. 
Among these $75$ examples, six are $29$-vertex, one is $49$-vertex, $15$ are 
$69$-vertex, $41$ are $89$-vertex and $12$ are $109$-vertex triangulations. 
The list is by no means expected to be complete. However, the existence of
an infinite family of such triangulations is unknown as of today. This is work 
in progress.

Furthermore, there is no tight triangulated $3$-manifold known which is
{\em not} tight-neighbourly. In fact, such a triangulation, if it exists, must   
fulfil very strong conditions. For instance, following a recent result of 
Bagchi and the second and fourth authors \cite{BDS}, {\em any} tight 
triangulated $3$-manifold with first Betti number smaller than $189$ must be 
tight-neighbourly.
\end{remark}

\section{Preliminaries and basic results}

\subsection{Simplicial complexes and graphs}

All simplicial complexes considered here are finite and abstract. The
vertex set of a simplicial complex $X$ is denoted by $V(X)$. By a
{\em triangulation} of a space $M$, we mean a simplicial complex $X$ whose
geometric carrier is $M$. By a {\em triangulated $d$-manifold} (resp., 
{\em $d$-sphere}) we mean a triangulation of a topological manifold (resp., sphere) of dimension $d$. 
The boundary complex of a $(d+1)$-simplex is a triangulated $d$-manifold with $d+2$ vertices and  
triangulates the $d$-sphere $S^d$. It is called the {\em standard $d$-sphere} and is denoted by $S^d_{d+2}$. 
A simplicial complex of dimension $d$ is called {\em pure} if all its maximal
faces are $d$-dimensional. For a simplicial complex $X$, and $A\subseteq
V(X)$, $X[A]$ denotes the simplicial complex consisting of all faces of
$X$ which are contained in $A$. We say that $X[A]$ is the subcomplex of $X$   
{\em induced} by the set $A$. 

For a finite set $A$, let ${\rm Cl}(A)$ denote the simplicial complex
consisting of all subsets of $A$. The {\em link} of a vertex $x$ in a simplicial
complex $X$ is defined to be the subcomplex $\lk{X}{x} := \{\alpha\in X: x\not\in\alpha,
\alpha\cup \{x\}\in X\}$. For $k \leq \dim(X)$, we define $\skel{k}{X} := \{\alpha\in X:
|\alpha|\leq k+1\}$ to be the {\em $k$-skeleton} of the simplicial complex $X$. 

For a $d$-dimensional simplicial complex $X$, the
vector $(f_0,f_1,\ldots,f_d)$ is called its $f$-{\em vector}, where
$f_i=f_i(X)$ is the number of $i$-dimensional faces of $X$. We will call a
simplicial complex {\em neighbourly} if $f_1=\binom{f_0}{2}$, i.e., any two
vertices form an edge.

A simplicial complex is called {\em flag}, if any $j$-element subset
of its vertices which spans a clique also spans a $(j-1)$-simplex, $j \geq 2$. 
For instance, a triangulated $2$-sphere is flag if and only if it has no 
induced $3$-cycle (i.e., there is no set of three vertices spanning three
edges but not a triangle). Since a flag 2-sphere is not a connected sum of two triangulated 2-spheres, a flag 2-sphere is also called {\em primitive}  \cite{BDS}. 

Unless the field is explicitly mentioned, the homologies and Betti numbers are considered 
w.r.t.\ $\mathbb{Z}_2$, but the arguments hold for an arbitrary field $\mathbb{F}$,
when the manifold is $\mathbb{F}$-orientable. So, $H_i(X) = H_i(X;\mathbb{Z}_2)$ and $\beta_i(X) = \beta_i(X;\mathbb{Z}_2)$ for 
all $i\geq 0$ and for all simplicial complexes $X$.

All graphs considered here are finite and simple. A standard reference for
basic terminology on graphs is \cite{BM}. For a graph $G$, $V(G)$ and
$E(G)$ will denote its vertex-set and edge-set respectively. For $v\in
V(G)$, $d_G(v)$ denotes the {\em degree} of $v$ in $G$. The set of
neighbours of $v$ in $G$ is denoted by $N_G(v)$, or just $N(v)$ when the
ambient graph is clear from the context. For $n\geq 3$, an $n$-cycle with edges $u_1u_2, \dots, u_{n-1}u_n, u_nu_1$ is denoted by 
$C_n(u_1, u_2, \dots, u_n)$. A graph is called {\em planar} if
it can be embedded in a plane (or $2$-sphere) without the edges
intersecting in an interior point. The following are well known.

\begin{lemma}\label{lem:graphs}
Let $G$ be a planar graph. Then,
\begin{enumerate}[{\rm (a)}]
\item $G$ does not contain $K_5$ as a subgraph; 
\item $|E(G)|\leq 3|V(G)|-6$.
\end{enumerate}
\end{lemma}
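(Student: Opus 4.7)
The plan is to prove part (b) first via Euler's formula, and then derive part (a) as an immediate corollary.

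For part (b), I would first reduce to the case $|V(G)| \geq 3$ (for smaller graphs the bound either holds trivially or can be checked directly, noting that we may also assume $G$ has at least one edge). I would then fix a planar embedding of $G$ in the $2$-sphere, and apply Euler's formula $V - E + F = 1 + c$, where $V = |V(G)|$, $E = |E(G)|$, $F$ is the number of faces in the embedding, and $c$ is the number of connected components. The key combinatorial step is a double count: each face in the embedding is bounded by at least $3$ edges (here I use that $G$ is simple and $V \geq 3$), and each edge lies on the boundary of at most $2$ faces, giving $2E \geq 3F$. Substituting $F \leq 2E/3$ into Euler's formula yields $V - E + 2E/3 \geq 1 + c \geq 2$, which rearranges to $E \leq 3V - 6$.

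For part (a), I would argue by contradiction. If a planar graph $G$ contained $K_5$ as a subgraph on some vertex set $A \subseteq V(G)$ of size $5$, then the induced subgraph $G[A]$ would itself be planar (as a subgraph of a planar graph inherits any planar embedding). But $G[A] = K_5$ has $|V| = 5$ and $|E| = \binom{5}{2} = 10$, which violates the bound $|E| \leq 3|V| - 6 = 9$ from part (b). This contradiction proves (a).

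The main obstacle, if any, is purely expository: one must be careful with degenerate cases in part (b) (small numbers of vertices, loops or multi-edges, disconnectedness, faces bounded by fewer than three edges). Since the statement explicitly concerns finite simple graphs, these cases are easy to dispense with. The essential engine is Euler's formula, which I would cite from the standard reference \cite{BM} rather than reprove.
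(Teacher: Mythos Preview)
Your argument is the standard textbook proof and is correct. Note, however, that the paper does not actually give a proof of this lemma at all: it is simply stated as ``well known'' with an implicit reference to \cite{BM}. So there is nothing to compare against---your write-up supplies a proof where the paper chose to omit one, and it does so in exactly the way any standard reference would.
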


\subsection{Stacked spheres} 

Let $X$ be a pure $d$-dimensional simplicial complex and let $x\not\in V(X)$. We say that $Y$ is
obtained from $X$ by {\em starring} the vertex $x$ in the $d$-face $\sigma$ of $X$, if
$Y=(X\bs\{\sigma\})\cup \{\tau\cup \{x\}: \tau \subset \sigma\}$. A simplicial complex is called a {\em stacked $d$-sphere} if it
is obtained from $S^d_{d+2}$ by a finite sequence of starring operations.
It is clear that a stacked $d$-sphere triangulates the $d$-sphere $S^d$. 
We know that a stacked $d$-sphere has at least two vertices of degree $d+1$, i.e., whose links are standard $(d-1)$-spheres with $d+1$ vertices (cf. Lemma 4.3\,(b) in \cite{BDLBT}). 

In \cite{WALKUP}, Walkup defined the class $\Kd{d}$ of triangulated $d$-manifolds whose vertex-links are stacked $(d-1)$-spheres. 

\subsection{Tight-neighbourly and tight triangulated manifolds}

The following result by Novik and Swartz \cite{NS} gives an upper bound on the first Betti
number of a triangulated $d$-manifold depending on its $1$-skeleton. They
prove:

\begin{proposition}[Novik, Swartz]\label{prop:novikswartz}
Let $X$ be a connected triangulated $d$-manifold.
\begin{enumerate}[{\rm (a)}]
\item If $d\geq 3$ then $\binom{d+2}{2}\beta_1(X)\leq
f_1(X)-(d+1)f_0(X)+\binom{d+2}{2}$. 
\item Further, if $d\geq 4$ and 
$\binom{d+2}{2}\beta_1(X)=f_1(X)-(d+1)f_0(X)+\binom{d+2}{2}$ then $X\in \Kd{d}$.
\end{enumerate}
\end{proposition}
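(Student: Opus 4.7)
The plan is to recast the inequality via the $g$-vector and then establish both parts through the rigidity theory of the $1$-skeleton, in the spirit of Kalai's proof of the Lower Bound Theorem. An algebraic computation using $h_1(X)=f_0(X)-(d+1)$ and $h_2(X)=f_1(X)-d\,f_0(X)+\binom{d+1}{2}$ gives $g_2(X):=h_2(X)-h_1(X)=f_1(X)-(d+1)f_0(X)+\binom{d+2}{2}$, so part (a) becomes $g_2(X)\ge\binom{d+2}{2}\beta_1(X)$ and the equality hypothesis in part (b) reads $g_2(X)=\binom{d+2}{2}\beta_1(X)$.

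For part (a), I would fix a generic geometric realisation $p\colon V(X)\to\mathbb{R}^{d+1}$ of the $1$-skeleton. By Fogelsanger's theorem, the $1$-skeleton of any closed triangulated $d$-manifold with $d\ge 3$ is generically $(d+1)$-rigid, so the space $\mathrm{Stress}(X,p)$ of equilibrium stresses has dimension exactly $g_2(X)$. The task then reduces to producing $\binom{d+2}{2}\beta_1(X)$ linearly independent elements of $\mathrm{Stress}(X,p)$. For each basis class $[\gamma]\in H_1(X;\mathbb{Z}_2)$ and each element $T$ of the canonical $\binom{d+2}{2}$-dimensional ``quadratic data'' space $\mathrm{Sym}^2(\mathbb{R}^{d+1})^{\ast}$, I would construct an equilibrium stress $s_{\gamma,T}$ supported near $\gamma$. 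Linear independence is verified by showing that a non-trivial vanishing combination $\sum c_{\gamma,T}s_{\gamma,T}=0$ would produce a $2$-chain bounding a non-zero combination of the basis classes, contradicting their independence in $H_1$.

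For part (b), when equality holds the space $\mathrm{Stress}(X,p)$ is spanned precisely by the $H_1$-stresses of part (a). Restricting the analysis to the closed star of each vertex and applying the standard link-summation identity that expresses $\sum_v g_2(\lk{X}{v})$ in terms of the $g$-vector of $X$ and $\beta_1(X)$, one forces $g_2(\lk{X}{v})=0$ for every vertex $v$. Since $d\ge 4$ implies $d-1\ge 3$, Kalai's characterisation of stacked spheres as the $(d-1)$-spheres with $g_2=0$ applies to each link, giving $\lk{X}{v}$ stacked for every $v$ and hence $X\in\Kd{d}$ by definition of Walkup's class.

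The main obstacle, and the technical heart of the argument, is the construction and linear independence of the $\binom{d+2}{2}\beta_1(X)$ equilibrium stresses attached to $H_1$. This is precisely where the hypothesis $d\ge 3$ is essential: it both supplies Fogelsanger's rigidity theorem and ensures the match $\binom{d+2}{2}=\dim\mathrm{Sym}^2\mathbb{R}^{d+1}$ between the numerical coefficient and the natural quadratic data on a generic embedding into $\mathbb{R}^{d+1}$. The stronger hypothesis $d\ge 4$ in part (b) enters only so that Kalai's $g_2$-characterisation of stacked spheres is available for the $(d-1)$-dimensional vertex links; the case $d=3$ lacks such a characterisation and is precisely the gap filled by the main theorem of the present paper.
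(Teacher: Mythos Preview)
The paper does not actually prove this proposition: it is quoted as a result of Novik and Swartz \cite{NS}, so there is no ``paper's own proof'' to compare against. For context, the Novik--Swartz argument is algebraic rather than geometric: they work with the Stanley--Reisner ring $\mathbb{F}[X]$ modulo a generic linear system of parameters and show that, for a Buchsbaum complex, the socle in degree $i$ contains a subspace of dimension $\binom{d+1}{i}\tilde\beta_{i-1}(X)$; combined with the manifold duality this yields the stated inequality on $g_2$, and the equality analysis in their algebraic setting produces the stackedness of links for $d\ge 4$. Your rigidity-theoretic outline is therefore a genuinely different route, closer in spirit to Kalai's proof of the Lower Bound Theorem than to \cite{NS}.

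That said, your sketch has real gaps beyond the level of ``details to be filled in.'' In part~(a) the entire content lies in the construction of the stresses $s_{\gamma,T}$ and their linear independence, and you have only asserted that this can be done; the heuristic that a $1$-handle should contribute $\binom{d+2}{2}=\dim \mathrm{Isom}(\mathbb{R}^{d+1})$ independent stresses is correct, but turning a homology basis into an actual family of equilibrium stresses on a fixed generic framework, and proving independence across different basis classes, is nontrivial and is precisely the step that needs an argument. In part~(b) the gap is more serious: there is no ``standard link-summation identity'' expressing $\sum_{v} g_2(\lk{X}{v})$ in terms of $g_2(X)$ and $\beta_1(X)$ in the form you need. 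The known MPW-type identities relate $\sum_v g_2(\lk{X}{v})$ to $g_2(X)$ and $g_3(X)$, not to $\beta_1$, so equality $g_2(X)=\binom{d+2}{2}\beta_1(X)$ does not by itself force $g_2(\lk{X}{v})=0$ via such a formula. A rigidity proof of~(b) would instead have to argue that the $H_1$-stresses are non-local (cannot be supported inside any vertex star), so that equality forces every vertex star to be stress-free and hence every link to satisfy $g_2=0$; this line can be made to work, but it is not the identity you invoked.
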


In \cite{LSS}, Lutz, Sulanke and Swartz observed that Proposition
\ref{prop:novikswartz} implies:

\begin{corollary}\label{cor:lss}
Let $X$ be a connected triangulated $d$-manifold. If $d\geq 3$, then
\begin{align}\label{eq:tn}
\binom{d+2}{2}\beta_1(X) & \leq \binom{f_0(X)-d-1}{2}.
\end{align}
Moreover for $d\geq 4$, equality holds if and only if $X$ is a
neighbourly member of $\Kd{d}$.
\end{corollary}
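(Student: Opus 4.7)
The plan is to derive Corollary \ref{cor:lss} as an immediate consequence of Proposition \ref{prop:novikswartz}(a) combined with the trivial edge bound $f_1(X) \leq \binom{f_0(X)}{2}$, where the latter holds with equality precisely when $X$ is neighbourly. Substituting this bound into the inequality of Proposition \ref{prop:novikswartz}(a) yields, for $d\geq 3$,
$$\binom{d+2}{2}\beta_1(X) \;\leq\; \binom{f_0(X)}{2} - (d+1)f_0(X) + \binom{d+2}{2}.$$

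The remaining step is purely algebraic: writing $n = f_0(X)$, one verifies the identity
$$\binom{n}{2} - (d+1)n + \binom{d+2}{2} \;=\; \binom{n-d-1}{2},$$
since both sides expand to $\tfrac{1}{2}\bigl(n^2 - (2d+3)n + (d+1)(d+2)\bigr)$. This rewrites the previous display as \eqref{eq:tn}, completing the inequality part of the corollary.

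For the equality statement when $d \geq 4$, I would reason in two directions. If equality holds in \eqref{eq:tn}, then both intermediate inequalities must also be equalities: the edge bound forces $f_1(X) = \binom{f_0(X)}{2}$, so $X$ is neighbourly, while equality in Proposition \ref{prop:novikswartz}(a) forces $X \in \Kd{d}$ by part (b) of that proposition. Conversely, if $X$ is a neighbourly member of $\Kd{d}$, then neighbourliness supplies the edge equality, and one invokes the (known) fact that members of $\Kd{d}$ realise equality in the Novik--Swartz inequality.

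The only point requiring care is this converse direction: membership in $\Kd{d}$ for $d \geq 4$ forces equality in Proposition \ref{prop:novikswartz}(a). This is part of the generalised Lower Bound Theorem for triangulated manifolds and is standard in this context, but since it is not quoted explicitly in the excerpted statement of Proposition \ref{prop:novikswartz}, it would need to be cited from \cite{NS}. I do not expect any other obstacle, as the entire corollary reduces to the algebraic identity above and a careful bookkeeping of the two equality cases.
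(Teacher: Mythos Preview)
Your proposal is correct and follows exactly the route the paper indicates: the corollary is stated as an observation of Lutz--Sulanke--Swartz drawn from Proposition~\ref{prop:novikswartz}, and the paper itself supplies no further detail beyond remarking (just after the corollary) that combining part~(a) with the trivial bound $f_1(X)\leq\binom{f_0(X)}{2}$ forces tight-neighbourly manifolds to be neighbourly. Your algebraic verification of the identity $\binom{n}{2}-(d+1)n+\binom{d+2}{2}=\binom{n-d-1}{2}$ and your equality analysis are precisely what is needed, and your caveat about the converse direction---that equality in Proposition~\ref{prop:novikswartz}(a) for members of $\Kd{d}$ must be cited separately from \cite{NS}, since part~(b) as quoted gives only one implication---is accurate and well spotted.
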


For $d\geq 3$, a triangulated $d$-manifold is called {\em tight-neighbourly}
if it satisfies \eqref{eq:tn} with equality. From part (a) of Proposition
\ref{prop:novikswartz} and the trivial inequality $f_1(X)\leq
\binom{f_0(X)}{2}$ it can be seen that tight-neighbourly triangulated
manifolds are neighbourly. 

A $d$-dimensional connected simplicial complex $X$ is said to be {\em tight} if for all
$A\subseteq V(X)$, the homology maps induced by the inclusion map, namely 
$H_i(X[A]; \mathbb{Z}_2)\rightarrow H_i(X; \mathbb{Z}_2)$, are injective for all $0\leq i\leq d$.
Despite many new examples described in recent literature \cite{BDSS2,DS},
examples of tight triangulated manifolds are still considered to be 
extremely rare. Nonetheless, they have so far evaded
complete combinatorial characterisation. 
From \cite{EFF} and \cite{BDEJC}, we know the following:

\begin{proposition}[Effenberger]\label{prop:eff}
For $d\neq 3$, the neighbourly members of $\Kd{d}$ are tight.
\end{proposition}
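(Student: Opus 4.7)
The plan is to handle the cases $d \in \{1,2\}$ and $d \geq 4$ separately, verifying in each the defining property of tightness: injectivity of $H_i(X[A]; \mathbb{Z}_2) \to H_i(X; \mathbb{Z}_2)$ for every $A \subseteq V(X)$ and every $0 \leq i \leq d$.

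First I would dispose of $d = 1$. Neighbourliness combined with the $1$-manifold condition forces $f_0(X) = 3$, so $X = S^1_3$ is the triangle; its induced subcomplexes are empty, a single vertex, one or two edges, or the full triangle, and tightness is immediate. For $d = 2$, a stacked $1$-sphere is precisely a cycle, hence $\Kd{2}$ consists of all closed triangulated surfaces, and I would invoke the classical result of K\"uhnel that every neighbourly closed triangulated surface is tight. This follows from a direct Euler-characteristic and component-count argument exploiting that $X[A]$ has a complete $1$-skeleton for every $A \subseteq V(X)$.

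For $d \geq 4$, my approach is Effenberger's, built on the $\sigma$-/$\mu$-vector framework of Bagchi--Datta (see Definition~\ref{def:sigmamu}): tightness is equivalent to the identities $\sigma_i(X) = \mu_i(X)$ for all $i$. The hypothesis that $X$ is neighbourly and lies in $\Kd{d}$ forces equality in the Novik--Swartz bound (Proposition~\ref{prop:novikswartz}(b)), which pins down $\mu_1(X)$ exactly. A link-wise evaluation of the remaining $\mu_i$'s then exploits the fact that the reduced $\mathbb{Z}_2$-homology of any induced subcomplex of a stacked $(d-1)$-sphere is concentrated in degrees $0$ and $d-1$; this concentration simultaneously controls $\sigma_i$ and forces the required equalities $\sigma_i = \mu_i$ in every intermediate degree, with the top-dimensional equality following from orientability/duality.

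The main obstacle is middle-dimensional injectivity, $H_i(X[A]) \to H_i(X)$ for $1 \leq i \leq d-1$. In low dimensions this is a bare-hands case analysis, but for $d \geq 4$ it reduces to showing that every $\mathbb{Z}_2$-cycle of $X[A]$ either bounds already in $X[A]$ or persists as a non-trivial class in $X$---precisely where the stacked structure of vertex links is essential, since it rules out ``hidden'' bounding chains in the link of each missing vertex. The excluded case $d = 3$ is exactly the dimension where the $2$-dimensional vertex-link analysis breaks down in the absence of a Barnette-type characterisation of stacked $2$-spheres, which is the very gap that Theorem~\ref{thm:result1} of the present paper closes.
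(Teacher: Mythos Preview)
The paper does not supply its own proof of Proposition~\ref{prop:eff}; it is quoted as a known result from \cite{EFF} (together with the companion Proposition~\ref{prop:bd} from \cite{BDEJC}), so there is no argument in the paper against which to compare your sketch.

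On the sketch itself: the cases $d\le 2$ are fine. Your treatment of $d\ge 4$, however, contains a genuine error. You assert that tightness is equivalent to $\sigma_i(X)=\mu_i(X)$ for all $i$; this is not the Bagchi--Datta criterion. The criterion (for a neighbourly closed $\mathbb{Z}_2$-orientable manifold) is $\beta_i(X)=\mu_i(X)$ for all $i$; the $\sigma$-vector of $X$ itself does not appear. With the correct criterion the outline becomes: compute $\mu_1$ from the stacked links (they have $\sigma_0$ given by the formula in Corollary~\ref{cor:svector-stacked} one dimension down), use Corollary~\ref{cor:lss} to identify this value with $\beta_1$, observe that $\mu_i=0$ for $2\le i\le d-2$ because induced subcomplexes of stacked $(d-1)$-spheres have reduced homology only in degrees $0$ and $d-2$ (here is where $d\ge 4$ is used), and close with duality for $i=d-1,d$. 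Your last paragraph, which speaks of ruling out ``hidden bounding chains in the link of each missing vertex'' as the crux, is not how the argument actually runs; once $\beta_i=\mu_i$ is established, injectivity of $H_i(X[A])\to H_i(X)$ for all $A$ follows from the Bagchi--Datta framework without any further cycle-by-cycle analysis. Effenberger's original proof in \cite{EFF} is different again: it uses Kalai's structure theorem that for $d\ge 4$ every member of $\Kd{d}$ bounds a stacked $(d+1)$-ball, and deduces tightness from that combinatorial description rather than via the $\mu$-vector.
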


\begin{proposition}[Bagchi, Datta]\label{prop:bd}
If $X$ is a neighbourly member of $\Kd{3}$, then $X$ is tight if and only if
$X$ is tight-neighbourly. 
\end{proposition}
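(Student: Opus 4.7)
The plan is to work inside the $\sigma$- and $\mu$-vector framework of Bagchi and Datta \cite{BDEJC} invoked in the introduction. The key input is their characterisation that a connected triangulated $d$-manifold $X$ is tight (over $\mathbb{Z}_2$) if and only if $\sigma_i(X) = \beta_i(X)$ for every $0 \leq i \leq d$, where $\sigma_i(X)$ is a normalised weighted average of the $i$-th reduced Betti number over all induced subcomplexes of $X$. The companion vector $\mu_\bullet(X)$ is obtained by averaging $\sigma_{\bullet - 1}$ of the vertex links, and for $i=1$ it is, up to normalisation, built directly out of the separation indices $s(\lk{X}{v})$ of those links.

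First I would turn the hypotheses into an explicit value for $\mu_1(X)$. Neighbourliness forces every link $\lk{X}{v}$ to be an $(n-1)$-vertex triangulated $2$-sphere, where $n := f_0(X)$, and the hypothesis $X \in \Kd{3}$ then makes each of these links a stacked $2$-sphere. Theorem \ref{thm:result1} therefore pins down $s(\lk{X}{v}) = (n-9)n/20$ at \emph{every} vertex $v$ simultaneously, and averaging across the $n$ vertices turns $\mu_1(X)$ into a polynomial in $n$ alone.

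Next I would feed this value into the Dehn--Sommerville-/Morse-style identity of \cite{BDEJC} that ties $\sigma_\bullet(X)$, $\mu_\bullet(X)$, $\beta_\bullet(X)$ and the $f$-vector of $X$ together on a triangulated manifold. For a neighbourly triangulated $3$-manifold both $f_0$ and $f_1$ are determined by $n$, so at level $i=1$ the identity collapses to a scalar relation of the schematic form $\sigma_1(X) - \beta_1(X) = \mu_1(X) - q(n,\beta_1(X))$ with $q$ polynomial in its arguments. Substituting the closed-form value of $\mu_1(X)$ from the previous step, the right-hand side vanishes precisely when $\binom{5}{2}\beta_1(X) = \binom{n-4}{2}$, i.e., when $X$ is tight-neighbourly.

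Finally I would check that the remaining equalities $\sigma_i(X) = \beta_i(X)$ hold for free: $\sigma_0 = \beta_0 = 1$ by connectedness and neighbourliness, $\sigma_3 = \beta_3$ because $X$ is a closed $3$-manifold, and Poincaré duality applied to both $\sigma_\bullet$ and $\beta_\bullet$ reduces $\sigma_2 = \beta_2$ to $\sigma_1 = \beta_1$. Tightness is therefore equivalent to the single scalar equation $\sigma_1(X) = \beta_1(X)$, which the preceding step has identified with tight-neighbourliness. The main obstacle in carrying out this plan is not conceptual but arithmetical: one has to verify that the separation-index contribution of the $n$ stacked links enters the $\sigma/\mu$ identity in exactly the combination that matches $\binom{n-4}{2}/\binom{5}{2}$, and it is here that the sharp value of $s$ on stacked $2$-spheres supplied by Theorem \ref{thm:result1} is indispensable.
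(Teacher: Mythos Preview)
The paper does not prove Proposition~\ref{prop:bd}; it is quoted verbatim from \cite{BDEJC} and used only as a black box in the one-line derivation of Corollary~\ref{cor:tntight}. There is therefore no proof in this paper to compare your proposal against.

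On the proposal itself: the general strategy (compute $\mu_1$ from the links, then invoke a tightness criterion from \cite{BDEJC}) is the right one and is essentially how Bagchi and Datta argue, but two points need tightening. First, your appeal to Theorem~\ref{thm:result1} is stronger than required: you only use the \emph{value} of $s$ on stacked $2$-spheres, which is Corollary~\ref{cor:svector-stacked} and, as the paper notes, already appears in \cite{BDEJC}; the inequality and the characterisation of equality are not needed here, so Theorem~\ref{thm:result1} is not ``indispensable'' for this proposition. Second, and more seriously, your step~2 is a placeholder rather than an argument. You posit an unspecified identity $\sigma_1(X)-\beta_1(X)=\mu_1(X)-q(n,\beta_1(X))$ and assert that it collapses to the tight-neighbourly equation; but no such identity is stated in this paper, and you would have to name and verify the precise result from \cite{BDEJC} that plays this role. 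In fact the route in \cite{BDEJC} is more direct: for a neighbourly closed triangulated manifold one has $\beta_i\le\mu_i$ (Proposition~\ref{prop:bdejc} here), a duality $\mu_i=\mu_{d-i}$, and the characterisation of tightness by equality $\beta_i=\mu_i$; combining these with $\mu_1=(n-4)(n-5)/20$ gives the result without any auxiliary $\sigma$-identity. Your claim ``$\sigma_0=\beta_0=1$'' is also off under the definitions used here (for neighbourly $X$ every nonempty induced subcomplex is connected and the empty set contributes $-1$, so $\sigma_0(X)=-1$), which suggests you are mixing normalisations; whichever convention you intend should be made explicit before the tightness criterion is invoked.
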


\subsection{Bistellar flips}

\begin{figure}[htbp]
\centering
\subfloat[$0$ and $2$ moves\label{subfig:move0}]{
\begin{tikzpicture}
\def\sqrtthree{1.73}
\coordinate (A) at (0,0);
\coordinate (B) at (2,0);
\coordinate (C) at (1,\sqrtthree);
\coordinate (X) at (1,0.3*\sqrtthree);
\draw (A) -- (B) -- (C) -- cycle;
\draw (A) node[below left] {$a$};
\draw (B) node[below right] {$b$};
\draw (C) node[above] {$c$};

\begin{scope}[shift={(5,0)}] 
\coordinate (A) at (0,0);
\coordinate (B) at (2,0);
\coordinate (C) at (1,\sqrtthree);
\coordinate (X) at (1,0.3*\sqrtthree);
	\draw (A) -- (B) -- (C) -- cycle;
	\draw (X) -- (A) (X) -- (B) (X) -- (C);
\draw (A) node[below left] {$a$};
\draw (B) node[below right] {$b$};
\draw (C) node[above] {$c$};
\draw (X) node[above left] {\small $x$};
\end{scope}
\draw [thick,->] (2.8,0.6*\sqrtthree) -- ++(1.4,0);
\draw [thick,->] (4.2,0.4*\sqrtthree) -- ++(-1.4,0);
\draw (3.5,0.6*\sqrtthree) node[above] {\small $0$-move};
\draw (3.5,0.4*\sqrtthree) node[below] {\small $2$-move};

\end{tikzpicture}
}

\subfloat[$1$ moves\label{subfig:move1}]{
\begin{tikzpicture}
\def\sqrtthree{1.73}
\coordinate (A) at (0,0.5*\sqrtthree);
\coordinate (B) at (-1*\sqrtthree,0);
\coordinate (C) at (0,-0.5*\sqrtthree);
\coordinate (D) at (1*\sqrtthree,0);
\draw (A) -- (B) -- (C) -- (D) --cycle;
\draw (B) -- (D);
\draw (A) node[above] {$a$};
\draw (B) node[left] {$b$};
\draw (C) node[below] {$c$};
\draw (D) node[right] {$d$};

\begin{scope}[shift={(4.0*\sqrtthree,0)}] 
\coordinate (A) at (0,0.5*\sqrtthree);
\coordinate (B) at (-1*\sqrtthree,0);
\coordinate (C) at (0,-0.5*\sqrtthree);
\coordinate (D) at (1*\sqrtthree,0);
\draw (A) -- (B) -- (C) -- (D) --cycle;
\draw (A) -- (C);
\draw (A) node[above] {$a$};
\draw (B) node[left] {$b$};
\draw (C) node[below] {$c$};
\draw (D) node[right] {$d$};
\end{scope}
\draw[thick,<->] (1.5*\sqrtthree,0) -- (2.5*\sqrtthree,0);
\draw (2.0*\sqrtthree,0) node[above] {\small $1$-moves};
\end{tikzpicture}
}
\caption{Bistellar moves}
\label{fig:pachner}
\end{figure}
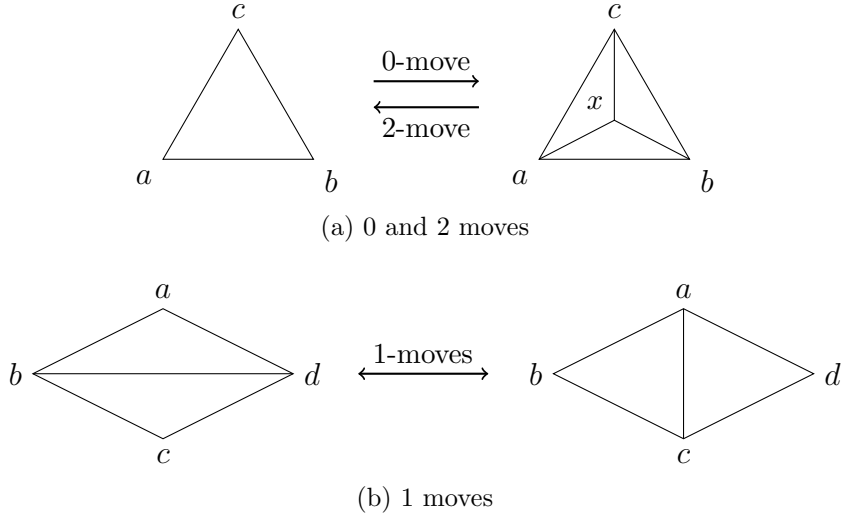

{\em Bistellar flips} or {\em Pachner moves} are ways of replacing a
combinatorial triangulation of a piecewise linear manifold with another
such triangulation of the same manifold. In dimension two, we have the following
bistellar moves:
\begin{enumerate}[{\rm (a)}]
\item ({\bf Bistellar 0-, 2-moves:}) Let $X$ be a two-dimensional pure simplicial complex. If
$Y$ is obtained from $X$ by starring a vertex $x$ in the face $abc$ of
$X$, we say that $Y$ is obtained from $X$ by the bistellar $0$-move
$abc\mapsto x$. We also say that $X$ is obtained from $Y$ by the
bistellar $2$-move $x\mapsto abc$  
(see Figure \ref{fig:pachner}\subref{subfig:move0}).
 
\item ({\bf Bistellar 1-moves:}) Let $X$ be a pure two-dimensional simplicial complex
and let $abd$ and $bdc$ be two adjacent faces of $X$ such that $ac$ is not
an edge of $X$. If $Y=(X\bs\{abd,bdc,bd\})$ $\cup \{abc,acd,ac\}$ then $Y$ and $X$ triangulate the same space. 
We say that $Y$ is obtained from $X$ by the bistellar $1$-move $bd\mapsto ac$. 
Observe that, in this case, $X$ is obtained from $Y$ by the bistellar move $ac\mapsto bd$ (see
Figure \ref{fig:pachner}\subref{subfig:move1}).  
\end{enumerate}

As a consequence of Pachner's classical theorem in  \cite{PACH} we have,

\begin{proposition}[Pachner]\label{thm:pachner}
Any triangulated $2$-sphere can be obtained from the standard $2$-sphere $S^2_4$ by a finite sequence of bistellar $0$-, $1$- and $2$-moves.
\end{proposition}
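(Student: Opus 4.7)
The plan is to invoke Pachner's classical theorem as a black box and unpack what it says in dimension two. Pachner proved that any two triangulations of a closed PL $d$-manifold are related by a finite sequence of bistellar $i$-moves for $0 \leq i \leq d$; in dimension $d = 2$ these are precisely the $0$-, $1$-, and $2$-moves defined in items (a) and (b) above. Since every topological $2$-sphere admits a unique PL structure, any triangulated $2$-sphere $X$ is PL-homeomorphic to $S^2_4$, so Pachner's theorem immediately supplies a finite sequence of such moves transforming one into the other.

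To obtain the statement in the precise form asserted---starting from $S^2_4$ and ending at $X$---I would use the fact that every bistellar move admits an inverse of the same family: a $0$-move is undone by a $2$-move (and vice versa), while a $1$-move is undone by another $1$-move, as is evident from the definitions in items (a) and (b). Hence the relation ``obtainable by a finite sequence of bistellar $0$-, $1$-, and $2$-moves'' is symmetric on pairs of $2$-sphere triangulations, and the sequence provided by Pachner may be reversed if necessary to realise the required direction.

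The main obstacle, if one sought a self-contained argument in dimension two, would be Pachner's theorem itself, which is a substantial piece of PL topology; here it is simply cited as \cite{PACH}. Everything else is routine: check that the general bistellar $i$-move for $d=2$ coincides with the combinatorial operation described in the preliminaries, and verify that the pairings of inverse moves above are correct. Both are immediate from the definitions, so there is no further work to do.
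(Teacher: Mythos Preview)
Your proposal is correct and matches the paper's treatment: the paper does not give a proof of this proposition at all but simply records it as a consequence of Pachner's classical theorem \cite{PACH}. Your unpacking---specialising the general statement to $d=2$ and noting the reversibility of the moves to fix the direction---is exactly the routine justification one would supply, and nothing further is needed.
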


\begin{definition}[$1A$-move and $1B$-move]\label{def:1a1b}
Let $X$ be a pure two-dimensional simplicial complex. Let $a,b,c,d$ be vertices of 
$X$ such that
$abd$, $bdc$ are two adjacent faces and $ac$ is not an edge. Then the $1$-move $bd \mapsto ac$ is said to be of type $1A$ if one of $a$ and $c$ is a vertex of degree $3$ in $X$. Similarly the 1-move $bd\mapsto ac$ is said to be of
type $1B$ if the degree of one of $a$ and $c$ in $X$ is precisely 4 and the degree of the other one is at least $4$.
\end{definition}

We will need the following slightly stronger version of Proposition \ref{thm:pachner} for
our purpose. In fact, the arguments in the proof of Lemma \ref{lem:pachner} will be used again later in the proof of Theorem \ref{thm:result2}. 

\begin{lemma}\label{lem:pachner}
Let $X$ be a triangulated $2$-sphere. Then $X$ can be obtained from $S^2_4$
by a finite sequence of bistellar $0$-moves, $1A$-moves and $1B$-moves.
\end{lemma}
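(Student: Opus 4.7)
The plan is to prove the lemma by strong induction on $n = f_0(X)$. The base case $n = 4$ is immediate since then $X = S^2_4$ and the empty sequence suffices. For the inductive step with $n \geq 5$, I will construct a triangulated $2$-sphere $X^{\ast}$ with $n-1$ vertices together with a short forward sequence $X^{\ast} \to \cdots \to X$ using only $0$-, $1A$- and $1B$-moves; applying the induction hypothesis to $X^{\ast}$ then completes the proof.

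The driving observation is an averaging argument: since $f_1(X) = 3 f_0(X) - 6$, the average degree of $X$ is $6 - 12/n < 6$, so some vertex $v$ has $\deg(v) \in \{3,4,5\}$. I pick such a $v$ of minimum degree, so that every neighbour of $v$ has degree at least $\deg(v)$, and split into cases. If $\deg(v) = 3$, then $X^{\ast}$ is the result of the $2$-move at $v$ and $X^{\ast} \to X$ is a single $0$-move. If $\deg(v) = 4$ with link $C_4(a,b,c,d)$, Lemma \ref{lem:graphs}(a) forbids $K_5$ on $\{v,a,b,c,d\}$, so after relabelling $ac$ is not an edge; the $1$-move $vb \mapsto ac$ produces an intermediate $X^{(1)}$ in which $v$ has degree $3$, and the $2$-move at $v$ then produces $X^{\ast}$. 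If $\deg(v) = 5$ with link $C_5(a,b,c,d,e)$, the same $K_5$-obstruction applied to $\{a,b,c,d,e\}$ rules out all five diagonals of the link being edges, so some flip at $v$ brings $v$ to degree $4$ in an $X^{(1)}$; then the degree-$4$ argument inside $X^{(1)}$ yields $X^{(2)}$ with $\deg(v) = 3$ and finally $X^{\ast}$.

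The crucial verification is that, read in the build-up direction $X^{\ast} \to \cdots \to X$, each flip qualifies as a $1A$- or $1B$-move. Any flip that sends $v$ from degree $4$ to degree $3$ has $v$ of degree $3$ in the target triangulation, so its reverse is a $1A$-move (the new edge of the reverse contains $v$). The flip in the $\deg(v) = 5$ case that sends $v$ from degree $5$ to degree $4$ has, in the target triangulation, $v$ of degree exactly $4$ and the other endpoint of the removed edge of degree $\geq 4$ (because that endpoint originally had degree $\geq 5$ in $X$ by the minimum-degree choice of $v$), so its reverse is a $1B$-move. I expect this degree-tracking bookkeeping, together with the straightforward check that each planned flip is legal (its new edge is absent, again via planar $K_5$-freeness), to be the only mildly delicate part; the remainder is mechanical.
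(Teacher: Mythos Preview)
Your proposal is correct and follows essentially the same inductive strategy as the paper's proof: reduce a low-degree vertex to degree~$3$ by flips, remove it, and read the reversed sequence as $0$-, $1A$-, $1B$-moves. The only minor difference is in the degree-$5$ case: the paper applies the planar edge bound $|E|\le 3|V|-6$ to the six vertices $\{x,a,b,c,d,e\}$ to locate a single link vertex $a$ with two non-neighbours $c,d$ simultaneously (giving both flips at once), whereas you iterate the $K_5$-freeness argument---once on the pentagon $\{a,b,c,d,e\}$ to find the first missing diagonal, and again on $\{v\}\cup$(new $4$-link) in $X^{(1)}$ to find the second---arriving at the same $0,1A,1B$ sequence.
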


\begin{proof}
We proceed by induction on the number of vertices in $X$, i.e., on $n=f_0(X)$. The lemma is trivially true for $n=4$. So, assume that $n\geq
5$ and the lemma is true for all triangulated 2-spheres $Y$ with $f_0(Y) <n$. 
Let $f_0(X)=n$. 
Since the $1$-skeleton of $X$ is a planar graph, $X$ must have a vertex of
degree at most $5$. We have the following cases: 

\begin{figure}[htbp]
\centering
\subfloat[$0,1A$ sequence:\label{subfig:01A}]{
\begin{tikzpicture}[yscale=0.8]
\def\side{1.71};
\coordinate (B) at (0,\side);
\coordinate (C) at (-1,0);
\coordinate (D) at (0,-\side);
\coordinate (A) at (1,0);
\coordinate (X) at (0,0.3*\side);

\draw (A) -- (B) -- (C) -- (D) -- cycle;
\draw (A) -- (C);
\draw (A) node[right] {\small $a$};
\draw (B) node[above] {\small $b$};
\draw (C) node[left] {\small $c$};
\draw (D) node[below] {\small $d$};

\begin{scope}[shift={(4,0)}]
\coordinate (B) at (0,\side);
\coordinate (C) at (-1,0);
\coordinate (D) at (0,-\side);
\coordinate (A) at (1,0);
\coordinate (X) at (0,0.3*\side);
\draw (A) -- (B) -- (C) -- (D) -- cycle;
\draw (X) -- (A) (X) -- (B) (X) -- (C);
\draw (A) -- (C);

\draw (A) node[right] {\small $a$};
\draw (B) node[above] {\small $b$};
\draw (C) node[left] {\small $c$};
\draw (D) node[below] {\small $d$};
\draw (X) node[below] {\small $x$};
\end{scope}

\begin{scope}[shift={(8,0)}]
\coordinate (B) at (0,\side);
\coordinate (C) at (-1,0);
\coordinate (D) at (0,-\side);
\coordinate (A) at (1,0);
\coordinate (X) at (0,0.3*\side);
\coordinate (Xd) at (0,0.3*\side-0.1);
\draw (A) -- (B) -- (C) -- (D) -- cycle;
\draw (X) -- (A) (X) -- (B) (X) -- (D) (X) -- (C);

\draw (A) node[right] {\small $a$};
\draw (B) node[above] {\small $b$};
\draw (C) node[left] {\small $c$};
\draw (D) node[below] {\small $d$};
\draw (Xd) node[below left] {\small $x$};
\end{scope}

\draw[thick,->] (1+0.5,0.6*\side) -- (3-0.5,0.6*\side);
\draw (2.0,0.6*\side) node[above] {\scriptsize $(0)$};
\draw[thick,->] (5+0.5,-0.6*\side) -- (7-0.5,-0.6*\side);
\draw (6.0,-0.6*\side) node[below] {\scriptsize $(1A)$};
\end{tikzpicture}
}

\subfloat[$0,1A,1B$ sequence:\label{subfig:01A1B}]{
\begin{tikzpicture}
\def\radius{1.2};
\def\angle{72};

\path (0,0) -- ++(90:\radius) coordinate (A);
\path (0,0) -- ++(90+\angle:\radius) coordinate (B);
\path (0,0) -- ++(90+2*\angle:\radius) coordinate (C);
\path (0,0) -- ++(90+3*\angle:\radius) coordinate (D);
\path (0,0) -- ++(90+4*\angle:\radius) coordinate (E);

\coordinate (X) at (0,0);
\coordinate (Xd) at (0,-0.1);

\draw (A) -- (B) -- (C) -- (D) -- (E) -- cycle;
\draw (A) -- (C) (A) -- (D);

\draw (A) node[above] {\small $a$};
\draw (B) node[left] {\small $b$};
\draw (C) node[below] {\small $c$};
\draw (D) node[right] {\small $d$};
\draw (E) node[right] {\small $e$};

\begin{scope}[shift={(3.5,0)}]
\path (0,0) -- ++(90:\radius) coordinate (A);
\path (0,0) -- ++(90+\angle:\radius) coordinate (B);
\path (0,0) -- ++(90+2*\angle:\radius) coordinate (C);
\path (0,0) -- ++(90+3*\angle:\radius) coordinate (D);
\path (0,0) -- ++(90+4*\angle:\radius) coordinate (E);

\coordinate (X) at (0,0);
\coordinate (Xd) at (0,-0.1);
\draw (A) -- (B) -- (C) -- (D) -- (E) -- cycle;
\draw (A) -- (C) (A) -- (D);
\draw (X) -- (A) (X) -- (C) (X) -- (D);

\draw (A) node[above] {\small $a$};
\draw (B) node[left] {\small $b$};
\draw (C) node[below] {\small $c$};
\draw (D) node[right] {\small $d$};
\draw (E) node[right] {\small $e$};
\draw (Xd) node[below] {\small $x$};
\end{scope}

\begin{scope}[shift={(7,0)}]
\path (0,0) -- ++(90:\radius) coordinate (A);
\path (0,0) -- ++(90+\angle:\radius) coordinate (B);
\path (0,0) -- ++(90+2*\angle:\radius) coordinate (C);
\path (0,0) -- ++(90+3*\angle:\radius) coordinate (D);
\path (0,0) -- ++(90+4*\angle:\radius) coordinate (E);

\coordinate (X) at (0,0);
\coordinate (Xd) at (0,-0.1);
\draw (A) -- (B) -- (C) -- (D) -- (E) -- cycle;
\draw (X) -- (A) (X) -- (C) (X) -- (D);
\draw (X) -- (B) (A) -- (D);

\draw (A) node[above] {\small $a$};
\draw (B) node[left] {\small $b$};
\draw (C) node[below] {\small $c$};
\draw (D) node[right] {\small $d$};
\draw (E) node[right] {\small $e$};
\draw (Xd) node[below] {\small $x$};
\end{scope}

\begin{scope}[shift={(10.5,0)}]
\path (0,0) -- ++(90:\radius) coordinate (A);
\path (0,0) -- ++(90+\angle:\radius) coordinate (B);
\path (0,0) -- ++(90+2*\angle:\radius) coordinate (C);
\path (0,0) -- ++(90+3*\angle:\radius) coordinate (D);
\path (0,0) -- ++(90+4*\angle:\radius) coordinate (E);

\coordinate (X) at (0,0);
\coordinate (Xd) at (0,-0.1);

\draw (A) -- (B) -- (C) -- (D) -- (E) -- cycle;
\draw (X) -- (A) (X) -- (C) (X) -- (D);
\draw (X) -- (B) (X) -- (E);

\draw (A) node[above] {\small $a$};
\draw (B) node[left] {\small $b$};
\draw (C) node[below] {\small $c$};
\draw (D) node[right] {\small $d$};
\draw (E) node[right] {\small $e$};
\draw (Xd) node[below] {\small $x$};
\end{scope}
\begin{scope}[shift={(0,0.7)}]
\draw[thick, ->] (1.25,0) -- (2.25,0);
\draw (1.75,0) node[above] {\scriptsize $(0)$};
\draw[thick,->] (1.25+3.5,0) -- (2.25+3.5,0);
\draw (1.75+3.5, 0) node[above] {\scriptsize $(1A)$};
\draw[thick,->] (1.25+7.0, 0) -- (2.25+7.0,0);
\draw (1.75+7.0,0) node[above] {\scriptsize $(1B)$};
\end{scope}

\end{tikzpicture}
}
\caption{Illustration for Lemma \ref{lem:pachner}}
\label{fig:1A1B}
\end{figure}
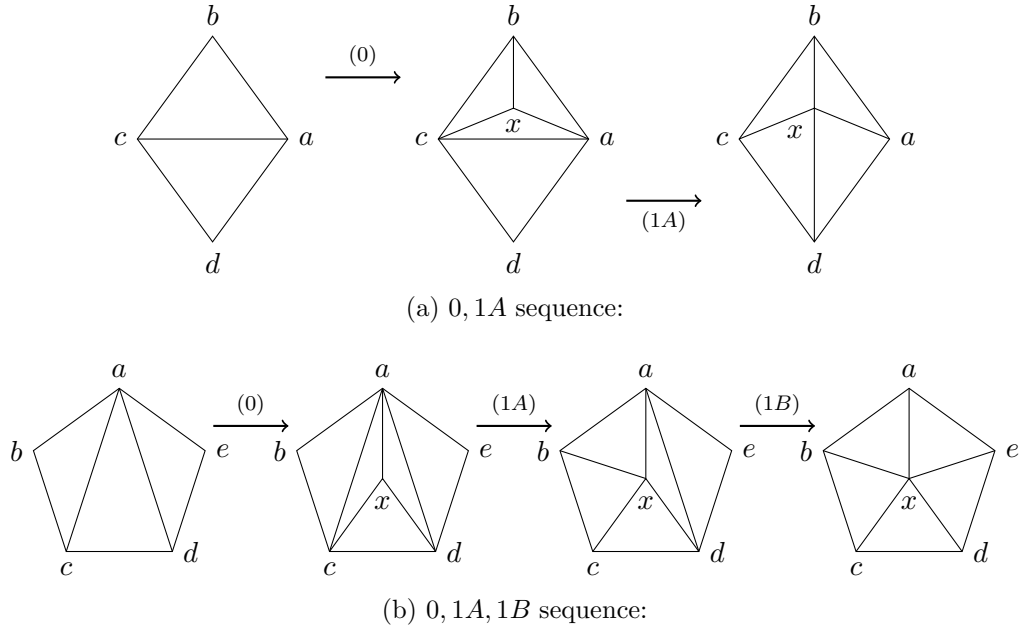

\noindent {\bf Case 1:} $X$ has a vertex of degree $3$. Let $x$ be the
vertex of degree $3$ in $X$. Let $a,b,c$ be the neighbours of $x$ in $X$.
As each edge is in exactly two triangles, it follows that the triangles $xab,xbc,xac$ are faces in $X$. However $abc$ cannot
be a face of $X$, since otherwise $X[\{a,b,c,d\}]\cong S^2_4$. Now consider $Y :=
X[V(X)\bs \{x\}]\cup \{abc\}$. It is easily seen that $X$ is obtained from
$Y$ by a $0$-move. The lemma now follows by the induction hypothesis. 

\medskip

\noindent {\bf Case 2:} $X$ has a vertex of degree $4$. Let $x$ be a vertex of degree $4$ and let $a,b,c,d$ be its neighbours such that
$\lk{X}{x}$ is the 4-cycle $C_4(a,b,c,d)$. Since $K_5$ is not
planar, we have $K_5\not\subseteq X$. Hence there is a pair of
non-adjacent vertices among $a,b,c,d$. Assume $ac$ is a non-edge. Define
$Y := X[V(X)\bs \{x\}]\cup \{abc,acd,ac\}$. Then $X$ can be obtained from
$Y$ by a $0$-move followed by a $1A$-move as illustrated in Figure
\ref{fig:1A1B}\subref{subfig:01A}.
The result then follows by invoking the induction hypothesis for $Y$.

\medskip

\noindent {\bf Case 3:} All vertices of $X$ have degree $5$ or more. Let $x$
be a vertex of degree $5$. Let $a,b,c,d,e$ be neighbours of $x$ such that
$\lk{X}{x}$ is the cycle $C_5(a,b,c,d,e)$. Since the induced subgraph on
the vertex set $\{x,a,b,c,d,e\}$ is planar, by Euler's bound (Lemma \ref{lem:graphs}(b)) it can have at most
$3\cdot 6-6=12$ edges. It follows then that at least one vertex among
$a,b,c,d,e$ has two non-neighbours. Let $a$ be such a vertex, with nonedges $ac$ and $ad$. Consider $Y := X[V(X)\bs \{x\}]\cup
\{abc,acd,ade,ac,ad\}$. Then $X$ can be obtained from $Y$ by a sequence of
a $0$-move, a $1A$-move and a $1B$-move as illustrated in Figure
\ref{fig:1A1B}\subref{subfig:01A1B}. The lemma
follows by using the induction hypothesis for $Y$.
\end{proof}

\subsection{The sigma-vector and mu-vector}

For any set $V$ and any integer $i\geq 0$, the collection of
all $i$-element subsets of $V$ will be denoted by $\binom{V}{i}$. Let $X$ be a simplicial complex of dimension $d$.
As usual, $\tilde{\beta}_i(X)$ denotes the reduced $i$th homology of $X$. Thus,
$\tilde{\beta}_0(X)=\beta_0(X; \mathbb{Z}_2)-1$ and $\tilde{\beta}_i(X)=\beta_i(X; \mathbb{Z}_2)$ for $i> 0$. We recall
the following definitions from \cite{BDEJC}.

\begin{definition}\label{def:sigmamu}
Let $X$ be a $d$-dimensional simplicial complex on $n$ vertices. The {\em
sigma-vector} $(\sigma_0,\sigma_1,\ldots,\sigma_d)$ of $X$ is defined by
\begin{align}\label{eq:sigma}
\sigma_i &=\sigma_i(X)=\sum_{A\subseteq
V(X)}\frac{\tilde{\beta}_i(X[A])}{\binom{n}{|A|}}, \quad
0\leq i\leq d, 
\end{align}
where $\tilde{\beta}_i = \beta_i$ if $i\neq 0$ and $\tilde{\beta}_0 = \beta_0-1$.  The {\em mu-vector} $(\mu_0,\mu_1,\ldots,\mu_d)$ of $X$ is defined by
\begin{align}\label{eq:mu}
\mu_0 &= \mu_0(X) = 1, \nonumber \\
\mu_i &= \mu_i(X) = \delta_{i1} + \frac{1}{n}\sum_{x\in
V(X)}\sigma_{i-1}(\lk{X}{x}), \quad 1\leq i\leq d.
\end{align}
Here $\delta_{i1}$ is the Kronecker delta, i.e., $\delta_{i1} = 0$ if $i\neq 1$ and $\delta_{11} = 1$.
\end{definition}

The following result follows from Theorem 2.6 in \cite{BDEJC}.

\begin{proposition}\label{prop:bdejc}
Let $X$ be a neighbourly simplicial complex of dimension $d$. Then
$\beta_i\leq \mu_i$ for all $0\leq i\leq d$. 
\end{proposition}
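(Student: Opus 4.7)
The proposition is quoted from Theorem~2.6 of \cite{BDEJC}; I sketch the proof strategy. The plan is a Mayer--Vietoris argument averaged over random vertex orderings.

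First, dispose of small $i$: neighbourliness forces the $1$-skeleton of $X$ to equal $K_n$, so $X$ is connected and $\beta_0(X)=1=\mu_0(X)$. For $i\geq 1$, fix a permutation $\pi$ of $V(X)$ and consider the filtration $X_k^\pi := X[\{\pi(1),\ldots,\pi(k)\}]$. Each inclusion $X_{k-1}^\pi \hookrightarrow X_k^\pi$ attaches the contractible cone $\st{X_k^\pi}{\pi(k)}$ along the induced link $\lk{X}{\pi(k)}[\{\pi(1),\ldots,\pi(k-1)\}]$, so Mayer--Vietoris yields
$$\tilde\beta_i(X_k^\pi) - \tilde\beta_i(X_{k-1}^\pi) \;\leq\; \tilde\beta_{i-1}\bigl(\lk{X}{\pi(k)}[\{\pi(1),\ldots,\pi(k-1)\}]\bigr).$$
Telescoping along the filtration produces $\beta_i(X) \leq \sum_{k=1}^{n}\tilde\beta_{i-1}(\lk{X}{\pi(k)}[\{\pi(1),\ldots,\pi(k-1)\}])$.

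Next I average this inequality over all $n!$ permutations. For each pair $(v,B)$ with $v\in V(X)$ and $B\subseteq V(X)\setminus\{v\}$, the number of permutations with $\pi(|B|+1)=v$ and $\{\pi(1),\ldots,\pi(|B|)\}=B$ is $|B|!\,(n-|B|-1)!$. Regrouping the averaged right-hand side using these multiplicities yields exactly
$$\frac{1}{n}\sum_{v\in V(X)}\sum_{B\subseteq V(X)\setminus\{v\}} \frac{\tilde\beta_{i-1}(\lk{X}{v}[B])}{\binom{n-1}{|B|}} \;=\; \frac{1}{n}\sum_{v\in V(X)}\sigma_{i-1}(\lk{X}{v}).$$
Comparing with the definition of $\mu_i$, the only remaining ingredient is the Kronecker-$\delta_{i1}$ correction: it arises from the $k=1$ step of the filtration, where the attaching locus is empty and $\tilde\beta_{-1}(\emptyset)=1$ contributes $+1$ precisely in the $i=1$ line.

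The main obstacle is the bookkeeping in the averaging step---in particular the precise accounting of the $\delta_{i1}$ term and ensuring that the Mayer--Vietoris bound holds uniformly across the filtration for subsets $A$ of every size. Neighbourliness enters chiefly through the base case $i=0$ and in keeping the combinatorial identities clean; without it, the sub-filtrations $X_k^\pi$ may fail to be connected from the outset, and extra terms appear on the right-hand side of the telescoping inequality.
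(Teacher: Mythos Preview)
The paper itself gives no proof here; it merely cites Theorem~2.6 of \cite{BDEJC}. Your sketch is exactly the argument used there---a random-order filtration, the Mayer--Vietoris bound at each step, and averaging---so there is nothing to compare against.

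Two small corrections to the bookkeeping you flagged. The $\delta_{i1}$ does not come from $\tilde\beta_{-1}(\emptyset)$: at the step $k=1$ the link is empty and the right-hand side of your inequality is $\tilde\beta_{i-1}(\emptyset)$, which for $i=1$ equals $\tilde\beta_0(\emptyset)=\beta_0(\emptyset)-1=-1$ under the paper's convention. The Mayer--Vietoris inequality therefore \emph{fails} at $k=1$ (it would read $0\le -1$); one handles this by noting that the $k=1$ increment is identically $0$, summing the honest bound only over $k\ge 2$, and then reinserting the term $\tilde\beta_0(\emptyset)=-1$ to reconstitute the full sum that averages to $\sigma_0$. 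The resulting $+1$ is the $\delta_{11}$ in the definition of $\mu_1$. Second, neighbourliness is not used ``chiefly through the base case $i=0$'': it is what forces $V(\lk{X}{v})=V(X)\setminus\{v\}$ for every $v$, so that after averaging the sum over $B\subseteq V(X)\setminus\{v\}$ with weights $\binom{n-1}{|B|}^{-1}$ is literally $\sigma_{i-1}(\lk{X}{v})$. Without it the link sizes vary with $v$, the denominators in the definition of $\sigma_{i-1}$ no longer match $\binom{n-1}{|B|}$, and the identity breaks down.
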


\section{Separation index of a graph}

For a graph $G$, let $q(G)$ denote the number of connected components of
$G$. We know that $\beta_0(X)$ is the number of connected components of
$X$, which is same as the number of connected components in the
$1$-skeleton of $X$. Thus, we see that $\sigma_0(X)$ is (roughly) a weighted average of
the number of connected components over all induced subgraphs of the
$1$-skeleton of $X$. This motivates us to define:

\begin{definition}\label{def:sepindex}
Let $G$ be a graph on $n$ vertices. We define the {\em separation index} of $G$ to be
$s(G)$, given by
\begin{align}\label{eq:sepindex}
s(G) & := \sum_{A\subseteq V(G)} \frac{q(G[A])-1}{\binom{n}{|A|}}= \sum_{i=0}^n s_i(G),
\end{align}
where 
\begin{align*} 
s_i(G) & := \frac{1}{\binom{n}{i}}\sum_{\stackrel{A\subseteq V(G)}{|A|=i}}
\big(q(G[A])-1\big). 
\end{align*}
So, $s_i(G)$ is one less than the average number of components in an induced subgraph with $i$ vertices. 
\end{definition}

It is easily seen that for any graph $G$, we have $-1\leq s(G)\leq (n+1)(n-2)/2$, with the lower and
upper bounds attained by the complete graph $K_n$ and its complement respectively. If $X$ is obtained from $Y$ by a bistellar move then the relation between the sigma-vectors of $X$ and $Y$ is given in \cite{BDEJC}. 
We present an elementary proof of the following. 

\begin{lemma}\label{lem:zeromove}
Let $X$ be obtained from $Y$ by a bistellar $0$-move. If $Y$ triangulates $S^2$ and $f_0(Y)=n$ then 
$s(X)=(n+2)s(Y)/(n+1)+(n+2)/20$.
\end{lemma}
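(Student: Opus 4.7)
The plan is to compare $X$ with $Y$ subset-by-subset on the 1-skeleton. Write $V(X)=V(Y)\cup\{x\}$ where $x$ is the new vertex, starred in some triangle $abc$ of $Y$. The 1-skeleton of $X$ is obtained from that of $Y$ by adding the vertex $x$ together with the three edges $xa,xb,xc$ (no old edge is removed, since the triangle $abc$ is only replaced by three new triangles). I would split the defining sum
$s(X)=\sum_{A\subseteq V(X)}(q(X[A])-1)/\binom{n+1}{|A|}$
according to whether $x\in A$ or not, and express everything in terms of quantities for $Y$.

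For $A\subseteq V(Y)$ (i.e.\ $x\notin A$), $X[A]=Y[A]$ as graphs, so these terms aggregate to $\sum_{i=0}^{n}\binom{n}{i}\binom{n+1}{i}^{-1}s_i(Y)=\sum_{i}\tfrac{n+1-i}{n+1}s_i(Y)$. For $A=B\cup\{x\}$ with $B\subseteq V(Y)$, attaching $x$ to $Y[B]$ via the edges from $x$ to $B\cap\{a,b,c\}$ changes the component count by $1-m(B)$, where $m(B)$ is the number of distinct components of $Y[B]$ meeting $\{a,b,c\}$. The crucial simplification is that $a,b,c$ are mutually adjacent in $Y$ (since $abc$ is a face), so whenever two of them lie in $B$ they are automatically in the same component of $Y[B]$. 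Consequently
\[
m(B)=\mathbb{1}[B\cap\{a,b,c\}\neq\varnothing],
\qquad
q(X[A])-1=(q(Y[B])-1)+\mathbb{1}[B\cap\{a,b,c\}=\varnothing].
\]
The $q(Y[B])-1$ contribution over $x\in A$ sums to $\sum_i\binom{n}{i}\binom{n+1}{i+1}^{-1}s_i(Y)=\sum_i\tfrac{i+1}{n+1}s_i(Y)$, which when added to the $x\notin A$ sum collapses beautifully to $\frac{n+2}{n+1}s(Y)$.

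All that is left is the indicator sum
\[
T_n:=\sum_{B\subseteq V(Y)\setminus\{a,b,c\}}\frac{1}{\binom{n+1}{|B|+1}}
=\sum_{i=0}^{n-3}\frac{\binom{n-3}{i}}{\binom{n+1}{i+1}},
\]
and the goal is to show $T_n=(n+2)/20$. This is the main obstacle, being a pure binomial identity. I would reduce the ratio to a polynomial via
\[
\frac{\binom{n-3}{i}}{\binom{n+1}{i+1}}
=\frac{(i+1)(n-i)(n-i-1)(n-i-2)}{(n+1)n(n-1)(n-2)},
\]
substitute $j=n-i-2$, and evaluate the resulting polynomial sum using the standard telescoping formulas $\sum_{j=1}^{m}j(j+1)(j+2)=\tfrac{m(m+1)(m+2)(m+3)}{4}$ and $\sum_{j=1}^{m}j(j+1)(j+2)(j+3)=\tfrac{m(m+1)(m+2)(m+3)(m+4)}{5}$ with $m=n-2$. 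The two pieces combine to $(n-2)(n-1)n(n+1)(n+2)/20$, which after dividing by $(n+1)n(n-1)(n-2)$ gives exactly $(n+2)/20$.

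Putting the three contributions together yields $s(X)=\tfrac{n+2}{n+1}s(Y)+\tfrac{n+2}{20}$ as claimed. Note that the hypothesis that $Y$ is a 2-sphere is not used in the argument beyond the fact that $abc$ is a face of $Y$ (so the starring is legal); the identity is purely combinatorial once the "$abc$ triangle forces $m(B)\in\{0,1\}$" observation is made.
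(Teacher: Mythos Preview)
Your proof is correct and follows essentially the same approach as the paper: split the sum according to whether the new vertex is in the subset, use that $a,b,c$ are mutually adjacent to reduce the $x\in A$ case to an indicator of $B\cap\{a,b,c\}=\varnothing$, and combine the two pieces via $\tfrac{n+1-i}{n+1}+\tfrac{i+1}{n+1}=\tfrac{n+2}{n+1}$. The only minor difference is in evaluating the leftover binomial sum $\sum_{i}\binom{n-3}{i}/\binom{n+1}{i+1}$: the paper rewrites it as $\tfrac{3!(n-3)!}{(n+1)!}\sum_{j}\binom{n-j}{3}(j+1)=\tfrac{3!(n-3)!}{(n+1)!}\binom{n+2}{5}$, whereas you expand the ratio as a polynomial and telescope --- both routes are elementary and yield $(n+2)/20$.
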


\begin{proof}
Let $X$ be obtained from $Y$ by starring the vertex $v_0$ in the face $v_1v_2v_3$. Let $G=\skel{1}{Y}$ and
$H=\skel{1}{X}$. Then $H$ is obtained from $G$ by introducing a new
vertex $v_0$ and joining it to vertices $v_1,v_2,v_3$. We recall that
$v_1,v_2,v_3$ are mutually adjacent in $G$. Let $V := V(Y)$. Now
from \eqref{eq:sepindex} we have 
\begin{align}\label{eq:zeromove}
s(H) & = \sum_{S\subseteq V} \frac{q(H[S])-1}{\binom{n+1}{|S|}} +
\sum_{S\subseteq V} \frac{q(H[S\cup \{v_0\}])-1}{\binom{n+1}{|S|+1}}.
\end{align}
We note that $q(H[S])=q(G[S])$ for $S\subseteq V$, $q(H[S\cup \{v_0\}])=q(G[S])$ when $S$ has a non-empty intersection with 
$N(v_0)=\{v_1, v_2, v_3\}$ (which we denote by $S\meets N(v_0)$), and $q(H[S\cup \{v_0\}])=q(G[S])+1$ when
$S$ does not intersect $N(v_0)$ (which we denote by $S\nmeets N(v_0)$). We
split the second summation in \eqref{eq:zeromove} to get 

\begin{align*} 
s(H) &= \sum_{S\subseteq V} \frac{q(G[S])-1}{\binom{n+1}{|S|}} + \sum_{S\nmeets N(v_0)} \frac{q(G[S])}{\binom{n+1}{|S|+1}} + 
\sum_{S\meets N(v_0)}\frac{q(G[S])-1}{\binom{n+1}{|S|+1}}  \\
&= \sum_{S\subseteq V} \frac{q(G[S])-1}{\binom{n+1}{|S|}} + 
\sum_{S\subseteq V} \frac{q(G[S])-1}{\binom{n+1}{|S|+1}} + \sum_{S\nmeets N(v_0)}\frac{1}{\binom{n+1}{|S|+1}} \\
&= \sum_{S\subseteq V} \frac{q(G[S])-1}{\binom{n}{|S|}}\left[\frac{n-|S|+1}{n+1}+\frac{|S|+1}{n+1}\right] + 
\sum_{S\nmeets N(v_0)}\frac{1}{\binom{n+1}{|S|+1}}  \\
&= \frac{n+2}{n+1}\times s(G)  + \sum_{S\nmeets N(v_0)}\frac{1}{\binom{n+1}{|S|+1}} 
=  \frac{(n+2)s(G)}{n+1}  +\sum_{j=0}^{n-3}
\frac{\binom{n-3}{j}}{\binom{n+1}{j+1}} \\
&= \frac{(n+2)s(G)}{n+1}  +\frac{3!\times(n-3)!}{(n+1)!}\times \binom{n+2}{5} = \frac{(n+2)s(G)}{n+1}  + \frac{n+2}{20}.
\end{align*}
The last but one equality follows from the fact that $\sum_{j=0}^{n-3}
\binom{n-j}{3}\times(j+1)=\binom{n+2}{5}$. 
\end{proof}

The following corollary is a particular case of Theorem 3.5 (c) of \cite{BDEJC}. We have included here with an elementary proof for the sake of completeness. 

\begin{corollary}\label{cor:svector-stacked}
Let $X$ be a stacked $2$-sphere. If $f_0(X)=n$ then $s(X)=\frac{(n-8)(n+1)}{20}$. 
\end{corollary}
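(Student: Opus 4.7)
The plan is to prove this by induction on $n = f_0(X)$, using Lemma \ref{lem:zeromove} as the inductive step and the formula's value at $n=4$ as the base case. The key observation is that, by definition, every stacked $2$-sphere with $n \geq 5$ vertices is obtained from some stacked $2$-sphere on $n-1$ vertices by a bistellar $0$-move (i.e., starring a vertex in a triangle), so induction fits the hypothesis of Lemma \ref{lem:zeromove} exactly.

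For the base case $n=4$, the only stacked $2$-sphere is $S^2_4$, whose $1$-skeleton is the complete graph $K_4$. Every nonempty induced subgraph of $K_4$ is connected, so the only nonzero contribution to $s(S^2_4)$ comes from $A = \emptyset$, giving $s(S^2_4) = -1$. This matches $(4-8)(4+1)/20 = -1$.

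For the inductive step, assume the claim holds for stacked $2$-spheres with $n$ vertices, and let $X$ be a stacked $2$-sphere with $n+1$ vertices. By the recursive construction, there exists a stacked $2$-sphere $Y$ with $f_0(Y) = n$ such that $X$ is obtained from $Y$ by a bistellar $0$-move. Applying Lemma \ref{lem:zeromove} together with the induction hypothesis $s(Y) = (n-8)(n+1)/20$ yields
\begin{align*}
s(X) &= \frac{(n+2)s(Y)}{n+1} + \frac{n+2}{20} \\
     &= \frac{(n+2)(n-8)(n+1)}{20(n+1)} + \frac{n+2}{20} \\
     &= \frac{(n+2)(n-7)}{20},
\end{align*}
which is precisely $((n+1)-8)((n+1)+1)/20$, completing the induction.

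There is no real obstacle here: the whole corollary is essentially a verification that the affine recurrence $s(X) = \frac{n+2}{n+1}s(Y) + \frac{n+2}{20}$ supplied by Lemma \ref{lem:zeromove} is solved by $s = (n-8)(n+1)/20$, anchored at the unit value $s(S^2_4) = -1$. The only point worth stating explicitly is that the class of stacked $2$-spheres is closed under (and generated by) starring, so the induction stays within the class throughout.
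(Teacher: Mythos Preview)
Your proof is correct and follows exactly the same approach as the paper: induction on $n$ using Lemma \ref{lem:zeromove}, starting from $s(S^2_4)=-1$. The paper's own proof is simply a one-sentence version of your argument.
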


\begin{proof}
Since an $n$-vertex stacked 2-sphere is obtained from the standard 2-sphere $S^2_4$ by a sequence of $n-4$ bistellar 0-moves, the corollary follows by induction and Lemma \ref{lem:zeromove}.
\end{proof}

\begin{proof}[Proof of Theorem $\ref{thm:result1}$]
We have already shown that when $X$ is a stacked $2$-sphere on $n$
vertices, $s(X)=(n-8)(n+1)/20$. It remains to show that
$s(X)<(n-8)(n+1)/20$ when $X$ is not a stacked $2$-sphere. We proceed by
induction. For $n=4,5$ the result is vacuously true as all triangulated 
2-sphere on at most $5$ vertices are stacked. Assume that $n\geq 5$ and that
the result is true for all triangulated 2-spheres $Y$ with $f_0(Y)\leq n$. Let $X$ be a triangulated 2-sphere with $n+1$ vertices, which is not stacked. We consider the following cases:

\medskip

\noindent {\bf Case 1:} $X$ has a vertex of degree $3$. Then, from the proof of
Lemma \ref{lem:pachner}, there exists an $n$-vertex triangulated 2-sphere 
$Y$ such that $X$ is obtained from $Y$ by a bistellar
$0$-move. Since $X$ is not stacked, it follows that $Y$ is not stacked. Therefore, by the induction hypothesis $s(Y)<(n-8)(n+1)/20$. 
Then, by Lemma
\ref{lem:zeromove}, we have $s(X)= (n+2)s(Y)/(n+1)+(n+2)/{20} < (n-7)(n+2)/20$. 

\medskip

\noindent{\bf Case 2:} $X$ has a vertex of degree $4$, but no vertices of degree
 less than $4$. By the proof of Lemma \ref{lem:pachner}, there exist
vertices $a,b,c,d,x$ and triangulated 2-spheres $Y,Z$ such that: {\rm
(i)} $\lk{X}{x}=C_4(a,b,c,d)$, {\rm (ii)} $Y := X[V(X)\bs \{x\}]\cup
\{abc,adc,ac\}$ and {\rm (iii)} $Z$ is obtained from $Y$ by starring the vertex $x$ in the face $abc$, and $X$ is obtained from $Z$ by the 1-move $ac\mapsto dx$. 
By the induction hypothesis $s(Y) \leq (n-8)(n+1)/20$.

Let $U = V(X) \setminus \{a, b, c, d, x\}$. For $A\subseteq U$, let $\varphi(A\cup \{a, c\}) = A\cup\{d, x\}$. 
Let $T^+$ (reps., $T^-$) be the family of those subsets $S$ of $V(X)$ such that $q(X[S])>q(Z[S])$ (resp., $q(X[S]) <
q(Z[S])$). As removing (resp., adding) an edge
increases (resp., decreases) the number of components by at most one, we have $T^+ = \{S\subseteq V(X):
q(X[S])=q(Z[S])+1\}$, and $T^- = \{S\subseteq V(X):
q(X[S])=q(Z[S])-1\}$. For $S\in T^+$, since $q(X[S]) > q(Z[S])$, $f_1(X[S]) < f_1(Z[S])$. This implies, since the only edge in $Z$ which is not in $X$ is $ac$, $\{a, c\}\subseteq S$ for $S\in T^+$. Similarly $S\in T^- \Rightarrow \{d,x\}\subseteq S$. We have the
following cases. 

\smallskip

\noindent {\bf Subcase 2a:} $bd$ is not an edge in $Z$. Let $A\cup
\{a,c\}$ be a set in $T^+$. Then $A$ does not contain any common
neighbours of $a$ and $c$ (otherwise $q(X[A\cup \{a,c\}]) = q(Z[A\cup \{a,c\}])$). Thus, $b,d,x\not\in A$ and hence 
$A\subseteq U$. Then $\varphi(A\cup \{a,c\})=A\cup \{d,x\}$. Since
$Z[A\cup \{d,x\}]$ does not have a $d$-$x$-path and $dx$ is an edge in
$X$, we have $q(X[A\cup \{d,x\}]) = q(Z[A\cup \{d,x\}])-1$. Therefore,
$\varphi$ is an injection from $T^+$ to $T^-$, which preserves sizes of
sets. Then, it follows that $s(X)\leq s(Z)$. Consider the set $S=\{b,d,x\}$.
Since $bd$ is not an edge, we have $S\in T^-$. Since $S\not\in
\varphi(T^+)$, by Lemma \ref{lem:zeromove}, we have  $s(X) < s(Z) = \frac{(n+2)s(Y)}{n+1}+\frac{n+2}{20} \leq 
\frac{(n+2)(n-8)}{20} + \frac{n+2}{20} = \frac{(n+2)(n-7)}{20}$. 

\smallskip

\noindent{\bf Subcase 2b:} $bd$ is an edge in $Z$. As in Subcase 2a, we can show that
$s(X)\leq s(Z)$. Suppose $Y$ is a stacked sphere. Then $Y$ has at least two vertices of
degree $3$. Since $d_X(u)=d_Y(u)$ for all $u\in U$, we conclude that at least
two vertices among $a,b,c,d$ have degree $3$ in $Y$. 
Since $K_4\subseteq Y[\{a,b,c,d\}]$, this implies that $Y[\{a,b,c,d\}]\cong S^2_4$. Thus, $S^2_4 \subseteq Y$. 
This is not possible since $Y$ is a triangulated 2-sphere and $f_0(Y)\geq 5$. 
Therefore, $Y$ is not a stacked sphere and hence by the induction hypothesis for $Y$, $s(Y) < (n-8)(n+1)/20$. 
Then, by Lemma \ref{lem:zeromove}, $s(X) \leq s(Z) = \frac{(n+2)s(Y)}{n+1}+\frac{n+2}{20} <  
\frac{(n+2)(n-8)}{20} + \frac{n+2}{20} = \frac{(n+2)(n-7)}{20}$. 

\medskip


\noindent{\bf Case 3:} All vertices in $X$ have degree $5$ or more. By 
the proof of Lemma \ref{lem:pachner} there exist vertices $a,b,c,d,e$ and
$x$ in $X$ and triangulated 2-spheres $Y,Z$ such {\rm (i)} $\lk{X}{x}=C_5(a,b,c,d,e)$, {\rm (ii)} $Y:=X[V(X)\bs
\{x\}]\cup \{abc,acd,ade,ac,ad\}$, $Z$ is obtained from $Y$ by starring the vertex $x$ in 
the face $acd$, and {\rm (iii)} $X$ is
obtained from $Z$ by a $1A$-move ($ac\mapsto bx$) followed by a 
$1B$-move ($ad\mapsto ex$). 
Observe that for any vertex $u\in
V(Y)$, $d_Y(u)\geq d_X(u)-1$. Thus $d_Y(u)\geq 4$ for all $u\in V(Y)$.
Therefore $Y$ is not a stacked sphere. Hence by the induction hypothesis, $s(Y)
< (n-8)(n+1)/20$. 

We now show that $s(X)\leq s(Z)$. For $A\subseteq W := V(X)\bs\{a,b,c,d,e,x\}$, let 
\begin{align*}
\varphi(A\cup \{a,c,d\})  = A\cup \{b,e,x\}, & ~~ 
\varphi(A\cup \{a,c\})  = A\cup \{b,x\}, \\
\varphi(A\cup \{a,c,e\})  = A \cup \{c,e,x\}, & ~~ 
\varphi(A\cup \{a,d\})  = A \cup \{e,x\}, \mbox{ and}\\
\varphi(A\cup \{a,b,d\})  = A \cup \{b,d,x\}. &
\end{align*}

Let $V^{+} := \{S\subseteq V(X) \, : \, q(X[S]) > q(Z[S])\}$ and $V^{-} := \{S\subseteq V(X) \, : \, q(X[S]) < q(Z[S])\}$. 
Observe that for a set $S\in V^{+}$, $q(X[S]) = q(Z[S]) +1$ while for $S\in V^{-}$, $q(X[S]) = q(Z[S]) -1$ or $q(Z[S])-2$. 

First we show that every set in $V^+$ occurs on the left side of the mapping $\varphi$. Since the
only edges of $Z$ not present in $X$ are $ac$ and $ad$, any set
$S\in V^+$ contains $a$ and at least one of $c$, $d$. Let $S\in V^+$. Then if
$\{a,c,d\}\subseteq S$, we see that $b,e,x\not\in S$ (otherwise the induced
subgraph on $S$ does not decompose further on removal of edges $ac$ and
$ad$). Thus $S=A \cup \{a,c,d\}$ for some $A \subseteq W$. Next
suppose $\{a,c\}\subseteq S$ but $d\not\in S$. Then $S$ does not contain an
$a$-$c$-path in $X$, therefore $b,x\not\in S$. Thus, we have two
possibilities, $S= A\cup \{a,c\}$ or $S=A\cup \{a,c,e\}$ for some
$A\subseteq W$. The case $\{a,d\}\subseteq S$ but
$c\not\in S$ is symmetric, and leads to the last two descriptions of the
sets in the mapping $\varphi$ above. Hence we have shown that $\varphi$ is defined on
$V^+$. 

It is easily checked that $\varphi$ is an injection that preserves
sizes of sets. To establish $s(X)\leq s(Z)$ it is enough to show that
$\varphi(S)\in V^-$ whenever $S\in V^+$. We argue each case
separately.

\smallskip

\noindent {\bf Subcase 3a:} $S=A\cup \{a,c,d\}$. Then $\varphi(S)=A\cup
\{b,e,x\}$. Observe that there is no $x$-$b$ or $x$-$e$-path in
$Z[A\cup \{b,e,x\}]$. But $b,e,x$ lie in the same connected component in $X$.
Thus $q(X[\varphi(S)]) < q(Z[\varphi(S)])$, and $\varphi(S)\in
V^-$.

\smallskip 

\noindent {\bf Subcase 3b:} $S=A\cup \{a,c\}$. In this case $\varphi(S)=A\cup
\{b,x\}$. Since there is no $b$-$x$-path in $Z[\varphi(S)]$ and $bx$
is an edge in $X$, we have $\varphi(S)\in V^-$. 

\smallskip

\noindent {\bf Subcase 3c:} $S=A\cup \{a,c,e\}$. Since $S\in V^+$, we conclude
there is no $c$-$e$-path in $X[A]$. Then $\{x,c\}$ are separated from
$e$ in $Z$ but are in the same component of $X$. Therefore,
$\varphi(S)=A\cup \{c,e,x\}\in V^-$. 

\smallskip
 
The other two cases, namely, $S= A\cup \{a, d\}$ and $S= A\cup \{a, b, d\}$ are symmetric to  Subcases 3b and 3c respectively. 
Thus, we have $s(X)\leq s(Z)$. 

Since $Z$ is obtained from $Y$ by a $0$-move, by Lemma
\ref{lem:zeromove}, we have $s(Z)=(n+2)s(Y)/(n+1)+(n+2)/20$. Then, by the same argument as in Subcase 2b, $s(X) < (n+2)(n-7)/20$. This proves the theorem.
\end{proof}

\begin{corollary}\label{lem:sigma1}
 A neighbourly $n$-vertex triangulated $3$-manifold $X$ is in the class $\Kd{3}$ if and only if $\mu_1(X) = {(n-4)(n-5)}/{20}$. 
\end{corollary}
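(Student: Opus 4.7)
The plan is to express $\mu_1(X)$ directly in terms of the separation indices of the vertex links of $X$ and then invoke Theorem~\ref{thm:result1} link by link. By Definition~\ref{def:sigmamu},
$$\mu_1(X) = 1 + \frac{1}{n}\sum_{x \in V(X)} \sigma_0(\lk{X}{x}).$$
For any simplicial complex $L$, the invariant $\sigma_0(L)$ coincides with the separation index $s(L)$ of its $1$-skeleton: by definition $\sigma_0(L)=\sum_{A\subseteq V(L)} \tilde\beta_0(L[A])/\binom{|V(L)|}{|A|}$, and $\tilde\beta_0(L[A])$ equals $q(L[A])-1$ for each $A$ (with the convention $q(\emptyset)=0$). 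Hence
$$\mu_1(X) = 1 + \frac{1}{n}\sum_{x \in V(X)} s(\lk{X}{x}).$$

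Next, I would use neighbourliness. Since $X$ is a neighbourly triangulated $3$-manifold on $n$ vertices, every vertex link $\lk{X}{x}$ is a triangulated $2$-sphere whose vertex set is $V(X)\setminus\{x\}$, of cardinality $n-1$. Applying Theorem~\ref{thm:result1} to each such link, with the parameter $n$ of that theorem replaced by $n-1$, gives
$$s(\lk{X}{x}) \le \frac{(n-1-8)(n-1+1)}{20} = \frac{n(n-9)}{20},$$
with equality if and only if $\lk{X}{x}$ is a stacked $2$-sphere. Summing over all $n$ vertices and adding $1$ yields
$$\mu_1(X) \le 1 + \frac{n(n-9)}{20} = \frac{n^2-9n+20}{20} = \frac{(n-4)(n-5)}{20}.$$

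Finally, this bound is a sum of pointwise bounds, so equality $\mu_1(X)=(n-4)(n-5)/20$ holds if and only if equality holds in each term, i.e., every vertex link is a stacked $2$-sphere. By the definition of Walkup's class, this is exactly the statement $X \in \Kd{3}$, giving both directions of the corollary simultaneously. I do not expect a serious obstacle: the whole argument is an accounting reduction that packages the equality case of Theorem~\ref{thm:result1} once the identification $\sigma_0(L) = s(L)$ is made explicit.
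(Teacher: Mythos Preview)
Your proof is correct and follows essentially the same approach as the paper: express $\mu_1(X)$ via the separation indices of the links, use neighbourliness to ensure each link is an $(n-1)$-vertex $2$-sphere, apply Theorem~\ref{thm:result1} termwise, and read off the equality case. The only cosmetic difference is that you spell out the identification $\sigma_0(L)=s(L)$ explicitly, which the paper uses without comment.
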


\begin{proof}
Since $X$ is an $n$-vertex neighbourly $3$-manifold each vertex-link is an $(n-1)$-vertex triangulated 2-sphere. From Theorem
\ref{thm:result1}, we have
\begin{align} \label{eq:last}
\mu_1(X) & = 1+\frac{1}{n}\sum_{x\in V(X)} s(\lk{X}{x}) 
\leq 1+\frac{1}{n}\sum_{x\in V(X)} \frac{n(n-9)}{20} = \frac{(n-4)(n-5)}{20}.
\end{align}
Now, if $\mu_1(X) = {(n-4)(n-5)}/{20}$ then (by \eqref{eq:last}) all the vertex-links in $X$ satisfy Theorem
\ref{thm:result1} with equality, and hence are stacked spheres. So, $X\in \Kd{3}$. Conversely, if $X \in \Kd{3}$ then all vertex-links are stacked 2-spheres. Therefore, by Theorem \ref{thm:result1} and \eqref{eq:last}, we get $\mu_1(X) = {(n-4)(n-5)}/{20}$. This proves the result. 
\end{proof}

\begin{proof}[Proof of Theorem $\ref{thm:result2}$]
Let $X$ be a tight-neighbourly $3$-manifold with $f_0(X)=n$. Then from \eqref{eq:tn}, we
have $\beta_1(X)=(n-4)(n-5)/20$. Also, from Proposition \ref{prop:novikswartz}\,(a), $X$ is neighbourly. 
Then, by \eqref{eq:last}, $(n-4)(n-5)/20 = \beta_1(X) \leq \mu_1(X) \leq (n-4)(n-5)/20$.
Therefore, $\mu_1(X) = (n-4)(n-5)/20$. The result now follows from Corollary \ref{lem:sigma1}. 
\end{proof}

\begin{proof}[Proof of Corollary $\ref{cor:tntight}$]
Follows from Theorem \ref{thm:result2} and Proposition \ref{prop:bd}. 
\end{proof}

\begin{remark} \label{remark:converse}
We can give explicit examples of neighbourly members of $\Kd{3}$ which are not 
tight-neighbourly, thus disproving the converse of Theorem \ref{thm:result2}. 
By Perles' result (see \cite{BD2k+1}), a polytopal neighbourly 3-sphere is in 
$\Kd{3}$. In particular, the boundary complex of the cyclic 4-polytope is a 
neighbourly member of $\Kd{3}$. Therefore, by Corollary \ref{lem:sigma1}, if $S$ 
is any polytopal neighbourly 3-sphere with $f_0(S) \geq 6$ then $S$ is a 
neighbourly member of $\Kd{3}$ and 
$\beta_1(S) = 0 < (f_0(S)-4)(f_0(S)-5)/20 = \mu_1(S)$.
\end{remark}

\begin{proof}[Proof of Theorem~\ref{thm:two}]  
For $n=4$ the statement is true since there is only one $4$-vertex triangulated $2$-sphere which is both flag and stacked. So, assume that $n\geq 6$. 

In order to prove the theorem we consider an arbitrary non-flag 
$n$-vertex triangulated $2$-sphere $T$ and construct an $n$-vertex triangulated $2$-sphere $S$ such that $s(S) < s(T )$. 

Let $T$ be a non-flag $n$-vertex triangulated 2-sphere and $n\geq 6$.  
Then, there exists an induced 
$3$-cycle in $T$, i.e., vertices $u, v, w \in V(T)$ such that 
$T[\{u, v, w\}] = \{uv, uw, vw \}$. Since $T$ is a triangulated 2-sphere 
$T[\{u, v, w\}]$ divides $T$ into two 2-discs $D_1$ and $D_2$. Let $V_1$ and 
$V_2$ be the sets of interior vertices of $D_1$ and $D_2$. In particular, 
$V(T) =\{u, v, w\} \sqcup V_1\sqcup V_2$, and 
$D_1 := T[\{u, v, w\} \sqcup V_1]$, 
$D_2 := T[\{u, v, w\} \sqcup V_2]$. 

Since $uvw$ is not a face, both $V_1$ and 
$V_2$ are non-empty. Since $n\geq 6$, we can assume without loss that 
$\#(V_1) > 1$. 
Let the boundary triangles of $D_1$ be $uvb$, $vwc$ and $uwd$. Since
$\#(V_1) > 1$, we cannot have $b=c=d$. Without loss of generality, assume that $b\neq c$. Since 
$D_1$ is a 2-disc, $bw$ and $cu$ cannot both be edges. Again, without loss, 
assume that $bw$ is a non-edge in $T$. Let $uva$ be the 2-face in $D_2$ 
containing $uv$ (i.e., $uva$ and $uvb$ are the 2-faces in $T$ containing $uv$). 
Since $T[\{u, v, w\}]$ is separating in $T$ and $a, b$ are in different 
components, it follows that $ab$ is not an edge of $T$. 

\smallskip

Let $S$ be the triangulated $2$-sphere obtained from $T$ by the 
edge-flip $uv \mapsto ab$. (That is,  
$S= (T \setminus \{uv, uva, uvb\})\cup \{ab, abu, abv\}$.)
We prove $s(S) < s(T)$.

Let $A\subseteq V(T) = V(S)$. 

\medskip

\noindent {\bf Case 1:} $w\in A$. Since the only edge of $T$ which is 
not an edge in $S$ is $uv$, both $uw$ and $vw$ are edges in 
$S$ and $T$. The only way that $q(S[A]) > q(T[A])$ is if the deletion of
$uv$ generates an additional connected component. For this, both $u$ and $v$
must be in $A$ and must be in distinct connected components in $S[A]$. However,
$u$ and $v$ will always be joined in $S[A]$ via $w$ and hence 
$q(S[A]) \leq q(T[A])$.

\medskip

\noindent {\bf Case 2:} $w\not\in A$. Here we have the following cases. 

\smallskip

\noindent {\bf Subcase 2a:} One of $a$ or $b$ is in $A$. In this case, by the similar arguments as in Case 1, $q(S[A]) \leq q(T[A])$. 

\smallskip

\noindent {\bf Subcase 2b:} Both $a,b \not \in A$ and one of $u$ or $v$ is not in $A$. Then, by similar arguments,  $q(S[A]) \leq q(T[A])$. 

\smallskip

\noindent {\bf Subcase 2c:} $a, b\not\in A$ and $u, v\in A$. 
In this case, we have $q(S[A]) = q(T[A])$ or $q(S[A]) = q(T[A]) +1$. Now, 
consider the set $A^{\prime} := (A \setminus \{u, v\}) \cup \{a, b\}$. 
Since $T[\{u, v, w\}]$ is separating in $T$, $a$ and $b$ are in distinct
components, $u, v, w\not\in A^{\prime}$, $a, b\in A^{\prime}$, $ab\not\in 
T$, $ab\in S$, it follows that $q(S[A^{\prime}]) = q(T[A^{\prime}])-1$. Thus, $q(S[A]) + q(S[A^{\prime}]) \leq q(T[A])+ q(T[A^{\prime}])$ and 
$\#(A) = \#(A^{\prime})$. 

Since there is a one-to-one correspondence between the sets of $A$ and  
set of $A^{\prime}$ and $\#(A) = \#(A^{\prime})$, it follows that 
$s(S) \leq s(T)$. 

Note that, for $A= \{a, b, w\}$ 
we have $q(S[A]) = q(T[A])-1$ since $bw$ is not an edge of $T$ or 
$S$ and it 
follows that $s(S) < s(T)$. This completes the proof. 
\end{proof}

\noindent {\bf Acknowledgements:} This work is supported by the Department
of Industry and Science, Australia and the 
Department of Science \& Technology, India, under the Australia-India Strategic Research Fund (project AISRF06660).
The first author is also supported by the Australian Research Council (project DP1094516). 
The third author thanks the National Mathematics Initiative, India for support.
The second and third authors are also supported by the UGC Centre for Advanced Studies. 
This work was done when the second and third authors were visiting the University of 
Queensland (UQ), and they thank UQ for their hospitality. 
The authors thank the anonymous referees for many useful comments.

{\small 

}

\end{document}